\documentclass{amsart}
\usepackage{bbm}

\usepackage[latin1]{inputenc}

\usepackage{verbatim}

\newtheorem{prop}{Proposition}
\newtheorem{theorem}{Theorem}
\newtheorem{lemma}{Lemma}
\newtheorem{defi}{Definition}
\newtheorem{corollary}{Corollary}

\def\N{{\mathbb N}}
\def\R{{\mathbb R}}
\def\Z{{\mathbb Z}}
\def\P{{\mathbb P}}
\def\E{{\mathbb E}}
\def\T{{\mathbb T}}
\def\S{{\mathbb S}}

\def\cal{\mathcal}
\def\eps{\varepsilon}
\def\etal{{\em et al.}}

\newcommand\ind[1]{\mathbbm{1}_{\{#1\}}}
\newcommand\croc[1]{\left\langle #1\right\rangle}

\setcounter{tocdepth}{1}

\title[The Evolution of a Spatial Stochastic Network]{The Evolution of a Spatial Stochastic Network}

\author[Ph. Robert]{Philippe  Robert}
\address[Ph. Robert]{INRIA Paris --- Rocquencourt,  Domaine de Voluceau, 78153 Le Chesnay, France.}
\email{Philippe.Robert@inria.fr}
\urladdr{http://www-rocq.inria.fr/\string~robert}

\date{02/08/2010}

\keywords{Stochastic Networks. Queueing Systems. Birth and Death Processes. Stationary Recursive
  Equations. Random Point Processes. Backward Coupling.  Stationary Point Processes.} 

\begin{document}
\begin{abstract}
The asymptotic  behavior of a stochastic network represented  by a birth  and death
processes of particles on a compact state space is analyzed.  Births:  Particles are created at
rate $\lambda_+$  and their location is  independent of the  current configuration. Deaths
are due to negative particles arriving at rate $\lambda_-$. The death of a particle occurs
when  a negative  particle arrives  in  its neighborhood  and kills  it.  Several  killing
schemes are  considered.  The  arriving locations of  positive and negative  particles are
assumed to have  the same distribution. By using a  combination of monotonicity properties
and invariance  relations it  is shown  that the configurations  of particles  converge in
distribution for several models.  The problems  of uniqueness of invariant measures and of
the  existence   of  accumulation  points   for  the  limiting  configurations   are  also
investigated.  It is  shown for several natural models  that if $\lambda_+<\lambda_-$ then
the asymptotic configuration has a finite  number of points with probability $1$. Examples
with  $\lambda_+<\lambda_-$ and  an infinite  number of  particles in  the limit  are also
presented.
\end{abstract}

\maketitle

\bigskip

\hrule

\vspace{-3mm}

\tableofcontents

\vspace{-1cm}

\hrule

\bigskip

\section{Introduction}\label{secintro}
In this  paper one  studies the asymptotic  behavior of  configurations of points  in a
compact state space $H$. Two types of particles, ``$+$'' and ``$-$'', 
arrive on $H$ according to some arrival process.   
\begin{itemize}
\item A "$+$" particle stays at  its arriving site $x$, adding therefore a new point at
  the location $x$  to the current configuration. 
\item A ``$-$'' particle arriving at $x$ kills a  point of the configuration if there is
  one  in a  specified  neighborhood of $x$. In any case  a ``$-$'' particle disappears.  
\end{itemize}
These natural models of birth and death processes of particles occur in several domains.
\begin{enumerate}
\item {\em Queueing Systems.}\\
When there are only a finite number of possible locations $x_1$, \ldots, $x_p$ for the
particles, this model is equivalent to $p$ single server queues where $+$ are jobs
arriving in one of the queues of the system and  the arrival rate of $-$'s at location
$x_k$, $1\leq k\leq p$ is simply the service rate of the $k$th queue. 
\item  {\em Stochastic networks.}\\ 
In the context of a wireless network, the ``$+$'' particles represent the requests for
transmission at some location. A ``$-$'' particle is the capacity of service available in the
neighborhood of a point at some moment. The assumption that the closest $+$ of a $-$ is
transmitted is an approximation of the fact that, for energy dissipation reasons, this $+$
transmits with the highest rate. See Foss~\cite{Foss} for related mathematical models 
and Serfozo~\cite{Serfozo}.
\item {\em Biological Networks}.\\ Growth models of protein networks can be represented by
points in a three-dimensional cube, the birth of points corresponding to the local
extension of filaments.  See Beil \etal~\cite{Beil} and L\"uck \etal~\cite{Lueck}.
\item {\em Matching problems in theoretical computer science.}\\ For multi-dimensional on-line bin
  packing  problems. In this setting, a $+$  is an
item $A$ which is alone in its bin, once an item whose size matches ``well'' with that of $A$, both
of them are stored in the bin and $A$ is therefore removed. See Coffman
\etal\cite{Coffman}, Robert~\cite{Robert:03} and  Karp \etal~\cite{Karp} for example.   
\item {\em Statistics.}\\ Ferrari \etal~\cite{Ferrari} presents a simulation method of the 
  invariant distribution of reversible processes on configurations of points in $\R^d$ with constant
birth rate  and whose death rate of a particle is proportional to its multiplicity.
\end{enumerate}
In the following $H$ is a compact metric space and ${\cal M}(H)$ denotes the space of
non-negative Radon measures on $H$ carried by points. See Rudin~\cite{Rudin:01}  and
Dawson~\cite{Dawson:16} for the main definitions and results on ${\cal M}(H)$.
When the state $M_0$ of the initial configuration has $n$ points $z_1$, $z_2$, \ldots,
$z_n$, it will be described as an element of ${\cal M}(H)$, a point process on $H$,
i.e. $M_0=\delta_{z_1}+\delta_{z_2}+\cdots+\delta_{z_n}$, where $\delta_x$ is the Dirac
mass at $x\in H$. 

Starting from $M_0$, if the next particle arrives at the location $X_1$ and its class is given by
$I_1\in\{+,-\}$, then the next state of the configuration is $M_1$ defined by
\begin{equation}\label{ev1}
M_1=M_0+\ind{I_1=+}\delta_{X_1}-\ind{I_1=-}\delta_{t_1(X_1,M_0)},
\end{equation}
where $t_1(X_1,M_0)$ is the (possible) location of the point of $M_0$ which is removed. It
may happen that no point is removed, the Dirac mass at $t_1(X_1,M_0)$ is understood as the $0$ measure
on $H$ in this case. Several definitions for $t_1(X_1,M_0)$ are now presented depending on
the model considered. The function $(x,y)\mapsto d(x,y)$ denotes the distance of the metric space $H$.

\subsubsection*{The Cases of Local Interaction} 
In this setting a ``$-$'' particle arriving at $x\in H$ can only  kill a point located in  a ball
$B(x,1)$ of radius $1$ around $x$. If $B(x,1)$ does not contain a point of the
configuration, the ``$-$'' disappears.  
\medskip
\begin{enumerate}
\item {\em Local Greedy Policy (LG)}. The point $t_1(x,M)$ is the point of $M$ which is
  the closest to $x$ and at distance less than $1$: i.e.  a $y\in H$ such that
\[
d(x,y)=\inf\{d(x,z): z\in H, M(\{z\})\not=0 \text{ and } d(x,z)<1\}.
\]
By convention, if there is not such a point the Dirac measure $\delta_{t_1(x,M)}$ is
defined as the $0$ measure.  In the case there are several points achieving the above
infimum, $t_1(x,M)$ is chosen at random among the corresponding locations. 
\smallskip
\item {\em Local Random Policy  (LR)}. The point $t_1(x,M)$ is chosen at random in the
  subset $\{z\in H: M(\{z\})\not=0 \text{ and } d(x,z)<1\}$.
\smallskip
\item {\em Local One-Sided Policy  (LO)}. It is assumed that  $H$ is a subset of $\R^d$
for some $d\geq 1$. The point $t_1(x,M)$ is $y\in H$ such that
\[
d(x,y)=\inf\{d(x,z): z\in H, M(\{z\})\not=0, d(x,z)<1 \text{ and } z\geq x\},
\]
where the inequality $z\geq x$ is understood coordinate by coordinate. These policies
occur in the context of matching problems. See Karp \etal\cite{Karp}.
\end{enumerate}

\subsubsection*{Global Interaction} In this case there is no constraint of locality to kill a
  point. 
\begin{enumerate}
\item {\em Global Greedy Policy (GG)}. The point $t_1(x,M)$ is $y\in H$ such that
\[
d(x,y)=\inf\{d(x,z): z\in H, M(\{z\})\not=0\}.
\]
\item {\em Global One-Sided Policy  (GO)}. The point $t_1(x,M)$ is $y\in H\subset \R^d$ such that
\[
d(x,y)=\inf\{d(x,z): z\in H, M(\{z\})\not=0 \text{ and } z\geq x\},
\]
\end{enumerate}

\subsection*{Related Spatial Processes}
When  the arrival processes  of positive  and negative  particles are  independent Poisson
processes,  the system can  also be  described as  a continuous  time Markov  process. The
associated infinitesimal generator $\Omega$ can  be expressed as follows: For a convenient
functional $F$ on the space ${\cal M}(H)$,
\begin{multline*}
\Omega(F)=\lambda_+\int_H (F(\eta+\delta_x)-F(\eta))\mu(dx)
\\+\lambda_-\int_H (F(\eta-\delta_y)-F(\eta))\delta(x,y,\eta)\eta(dy)\mu(dx),
\end{multline*}
where $\mu$ is the distribution of $X_1$.
For example, for the $LG$ policy, the death rate $\delta$ is defined as
\[
\delta(x,y,\eta)=\ind{d(x,y)<1\wedge d(x,\eta-\delta_y)},
\]
where, $a\wedge b=\min(a,b)$ and, for $y\in H$ and $\mu\in{\cal M}(H)$
\[
d(y,\mu)\stackrel{\text{def.}}{=} d(y,\{z\in H: \mu(\{z\})\not=0\}).
\]

This is  the point of view of  Garcia and Kurtz~\cite{Garcia} where  Markov processes with
general birth rates and constant  death rates are introduced for the evolution of point
processes on a {\em non-compact} state space. See Penrose~\cite{Penrose} for a survey. 
The main problem analyzed in this case  is the construction of a Markov process with
values in the space  of point processes: due to the interaction  and the non-compact state
space, it is not  possible to order the jump instants so that  the existence result is not
straightforward. Additionaly  Penrose~\cite{Penrose} presents  some limit results  (Law of
large numbers and  central limit theorem) but  from a spatial point of  view: the
asymptotic behavior {\em at some fixed time} of some additive  functional of the point
process in a ball whose diameter goes to infinity.

In this  paper, one does not use explicitly the characterization of these processes by
their infinitesimal generator. The  state space  being compact, the existence results are
straightforward. Limit  results are investigated not with  respect to a  spatial component
but with  respect to 
{\em long time behavior}: when does the  configuration converges in law ?  Since the state
space  is not  of finite  dimension,  the classical  tools using  a Markovian approach,
like Lyapunov functions, see Chapter~8 and ~9 of Robert~\cite{Robert:08} for example, seem
to be more difficult to use. The dynamic which is investigated in this paper is specific
but as it  will be  seen it already leads to some non  trivial  problems: in  some cases,
at equilibrium, the associated stochastic process will not live in the space of Radon
measures for example.   See Section~\ref{secmisc}. 

\subsection*{Stability Property}
The arrival rates of "$+$" and "$-$" particles are denoted respectively by $\lambda_+$ and
$\lambda_-$ and the particles are represented  by a sequence $(I_n,X_n)$ where, for $n\geq
1$, $I_n\in\{+,-\}$ is the type of the $n$th particle and $X_n\in H$ is its location. When
there are  more "$-$" particles than  "$+$" particles, i.e.\  $\lambda_+<\lambda_-$, it is
likely  that the  distribution of  the  configuration should  converge to  a random  point
process having  a finite  number of points  with probability  $1$.  This property  will be
referred to as the {\em stability property} of the system.

For the  GG policy,  this property  holds: with some  independence assumptions,  the total
number of points  evolves as a reflected  random walk on integers with  the negative drift
$\lambda_+-\lambda_-$.  In this case, the geometry plays a minor role in the dynamics. 

As noted by Anantharam and Foss, see Foss~\cite{Foss}, the situation is quite different in
the case of local interaction. The stability property is quite challenging to prove, even
for the one-dimensional circle $\T_1(T)$ of length $T>0$ for example.  Mathematically, the
stability property is formally defined as follows: There exists a probability distribution $Q$
on  the  set ${\cal M}(H)$ of finite  Radon measures on $H$  such that
\begin{enumerate}
\item[a)] {\em Invariance}: if $M_0\stackrel{\text{dist.}}{=} Q$, then
  $M_1\stackrel{\text{dist.}}{=} Q$. 
\item[b)]  {\em Convergence}:  if $M_0\in  {\cal  M}(H)$ and  $(M_n)$ is  the sequence  of
  consecutive configurations, then $(M_n)$ converges in distribution to $Q$.
\end{enumerate}
It  is  quite natural  to  consider  a Markovian  approach  to  investigate the  stability
property.  The sequence $(M_n)$ can be seen  as a Markov chain on ${\cal M}(H)$.  In fact,
as it  will be seen,  the space ${\cal  M}(H)$ will prove to  be too small  to investigate
properly these questions,  a larger space of measures  has to be defined.  One  may try to
prove the Harris  ergodicity property of $(M_n)$ which would  give directly the properties
a) and b).  See Nummelin~\cite{Nummelin:02} for example.  In our case, due to the dynamics
of the process  and the complexity of  the space ${\cal M}(H)$, a  Markovian approach does
not  seem to  work for  a general  state space  $H$.  Furthermore,  as it  will  be shown,
symmetry properties play an important role in  these questions. It does not seem that they
can be really taken into account with a Markovian approach to tackle the general case. 
By using an interesting but specific Lyapunov function, Leskel\"a and Unger~\cite{Leskela}
proposed recently  an alternative proof of  the stability property in the case of the
one-dimensional torus. 

For the existence of an invariant distribution, the approach used in this paper
consists in replacing the problem of finding a distribution $Q$ which is invariant by
Equation~\eqref{ev1} by the problem of existence of a random variable $M$ on ${\cal M}(H)$
such that  
\begin{equation}\label{statev1}
M\circ\theta=M+\ind{I_1=+}\delta_{X_1}-\ind{I_1=-}\delta_{t_1(X_1,M_0)},
\end{equation}
holds  almost surely,  where $\theta$  is  a shift  operator on  a convenient  probability
space. This method goes back to  Loynes~\cite{Loynes:01} to study the stability of a
reflected random walk associated to a stationary sequence.

Loynes's method (1962), which  can be seen as a backward coupling,  has been used to study
stochastic   recursions  in  $\R^d_+$   associated  to   several  queueing   systems.  See
Neveu~\cite{Neveu:14} (1983)  and the references  therein.  Robert~\cite{Robert:03} (1987)
used it  to study a  bin-packing algorithm  related to  the GO policy  defined above.  Propp and
Wilson (1996) used this method (under the  name ``coupling from the past'') in the context
of the Ising model.  See Levin  \etal~\cite{Levin} for a discussion on backward couplings.
As always with backward couplings, a monotonicity property is the main ingredient to prove
the existence of  a random measure $M$ solution of  Equation~\eqref{statev1}. It turns out
that  the existence result  holds in  quite general  framework. 
See  Borovkov  and  Foss~\cite{Foss2}  for  a  general presentation  of  the  analysis  of
stochastic recursions.

 In  a second  step, {\em   invariance relations  and symmetry properties} provide  key
 arguments to  prove the main results  of  the  paper, i.e.\  that  such  a  $M$  is
 unique and  that  the  convergence property~(b) holds. These invariance relations have an
 important impact as it will be seen since the  solution of  Equation~\eqref{statev1} have
 a  finite mass  with probability~$1$ when  they hold.  On  the  other hand,  one
 exhibits examples  for  which these  symmetry relations fail and the solution $M$ has an
 infinite mass with probability~$1$. 

\subsection*{Outline of the Paper}
The paper is mainly devoted to  policies with local interactions when the proportion $p_+$
of "$+$", $p_+=\lambda_+/(\lambda_++\lambda_-)$  is less than $1/2$.  Section~\ref{secnot}
introduces the main definitions and notations  required, in particular, to deal with point
processes   which  may   have  an   infinite   number  of   points.  Theorem~\ref{ex}   of
Section~\ref{equisec}  shows that  under general  conditions  that there  always exists  a
random  point  process  with  possible   accumulation  points  such  that  the  invariance
relation~a) holds  for all  policies listed  above.  In this  general setting,  a stronger
result is  shown for the  local random  policy: the invariant  point process has  a finite
number   of  points   with  probability   $1$,   i.e.\  the   stability  property   holds.
Section~\ref{HG} considers an homogeneous case when $H$ is a  compact metric group,
like the  $d$-dimensional torus  or the  $d$-dimensional sphere. It  is proved  that there
exists a unique  random point process $M$ satisfying  Relation~\eqref{statev1} which has a
finite mass with probability $1$ and such that convergence property~b) holds.  This proves
in particular the stability property for homogeneous state spaces for all policies with
local  interaction.   Section~\ref{sectorus}  studies  a  simple  non-homogeneous  setting
$H=[0,T]$ for the LG  policy. It is proved that the stability  property also holds in this
case.        Section~\ref{secmisc}       considers       one-sided       policies       on
$H=[0,T_1]\times\cdots\times[0,T_d]$, it  is shown that  Properties a) and  b) also hold  in the
case  but  with  a limiting  point  process  having  an  infinite  number of  points  with
probability $1$.

\subsection*{Acknowledgments}
The author wishes to thank Serguei Foss for a discussion on this subject and for the
reference Foss~\cite{Foss}.

\section{Evolution Equations of Point Processes}\label{secnot}
The main notations and definitions concerning point processes are first introduced. 
See Rudin~\cite{Rudin:01} for the general definitions and results on Radon measures,
Dawson~\cite{Dawson:16} for an introduction to random point processes and
Neveu~\cite{Neveu:12} on stationary point processes.   

\subsection{Point Processes}
It is assumed throughout the paper that $H$  is a compact metric space (think of a bounded
closed  subset of  $\R^d$ for  example). A  point  process $M$  on $H$  is a  non-negative
Borel  measure  on  $H$  carried by points, i.e., such  that,  for  any Borel  subset  $A$
of  $H$  one  has $M(A)\in\N\cup\{+\infty\}$.  Define ${\cal  M}^*(H)$  as the set of  all point processes. If $M\in{\cal M}^*(H)$, 
$S(M)$ denotes the set of its  accumulation points,
\[
S(M)\stackrel{\text{def.}}{=}\{y\in H: \forall \eps>0, M(B(y,\eps))=+\infty\}. 
\]
Note that $S(M)$ is in particular a closed  set.  The space ${\cal M}(H)$ is the subset of
${\cal  M}^*(H)$ of  Radon non-negative  measures with  finite mass, i.e., the set of
elements $M\in {\cal  M}^*(H)$ such that $M(H)<+\infty$.  As it  will be seen, for  some
policies the state space  ${\cal M}(H)$ is not always  appropriate to study  the
asymptotic behavior  of configurations of  points in $H$.

If $f:H\to\R$ is some Borel function,
\[
\croc{f,M}\stackrel{\text{def.}}{=} \int_H f(x)M(dx),
\]
in particular $\croc{\mathbbm{1}_A,M}=M(A)$ if $A$ is a Borel subset of $A$. 
A sequence of point processes $(M_n)$ in ${\cal M}(H)$  will be said to  converge to
$M\in{\cal M}^*(H)$ if the sequence $(\croc{f,M_n}$ converges in distribution to
$\croc{f,M}$, for any continuous function $f$ with compact support in $H-S(M)$. 

\begin{comment}
Finally, if $\mu$ is some probability distribution on $H$, ${\cal M}^*_\mu(H)$
denotes the subset of elements $P$ of ${\cal M}^*(H)$  with a set $S(P)$ of
accumulation points negligible for $\mu$,  $\mu(S(P))=0$.
\end{comment}

The ordering of point processes is defined as follows. 
\begin{defi}
If $M$ and $P\in{\cal M}^*(H)$, one denotes by $M\ll P$ if  the relation $M(A)\leq P(A)$
holds for any Borel subset  $A$ of $H$.
\end{defi}
\noindent
If $M\ll P$,  the elements in the support of $M$ are in the support of $P$. 

\subsection*{Extension of the definition of the functional $\mathbf{t_1(\cdot,\cdot)}$ on $H\times\mathbf{{\cal M}^*(H)}$}\ \\
For $x\in H$, the variable $t_1(x,M)$ has been defined in Section~\ref{secintro} for $M\in
{\cal M}(H)$, i.e. when the point process has only a finite number of points. Since the
space ${\cal M}(H)$  is not closed for 
the topology of weak convergence, one has to
define it when there are accumulation points.  Furthermore it will allow
\begin{enumerate}
\item  To have a limiting evolution equation for the possible limiting points of the
sequence $(M_n)$ of the successive states of the configuration. 
\item  To properly define the problem of uniqueness of the invariant distribution for Equation~\eqref{ev1}.
\end{enumerate}
The  definition  of $t_1(\cdot,M)$  is extended to an arbitrary  element  $M$  of ${\cal M}^*(H)$. 
For that, one denotes by $\partial$ a cemetery state for which $\delta_\partial$ is the null measure.  
The variable $t_1(x,M)$ is defined as $\partial$  when $M(B(x,1))=0$ (no point to kill)
and in any of the following situations.  
\begin{itemize}
\item LG policy.
\begin{enumerate}
\item  there exists an accumulation point $a{\in} S(M)$ such that $d(x,a){<}1$,\\ $M(\{a\}){=}0$
  and $M(B(x,\eps)){=}0$ for all $\eps\leq d(x,a)$.
\item there exists $0<r<1$ such that $M(B(x,\eps))=0$ for all $\eps\leq r$ and
the set $\{y\in H: d(x,y)=r, M(\{y\})\not=0\}$ is infinite. 
\end{enumerate}
\item LR policy, when $M(B(x,1))=+\infty$;
\item LO policy. Similar to LG policy by replacing balls $B(x,\eps)$, $x\in H$, $\eps>0$ by $B(x,\eps)\cap  \{y:y\geq x\}$.
\end{itemize}
This definition gives the following proposition. 
\begin{prop}\label{convt1}
For the LG, LO, LR policies, if $(M_n)$ is a non-decreasing sequence, for the order $\ll$, of point
processes of ${\cal M}(H)$, such that $M_{n+1}(H)\leq  M_n(H)+1$,  if $M\in{\cal
 M}^*(H)$ is its limit, then the convergence in distribution
\[
\lim_{n\to+\infty} \delta_{t_1(x,M_n)}=\delta_{t_1(x,M)}
\]
holds in ${\cal M}(H-S(M))$.
\end{prop}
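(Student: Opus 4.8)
The plan is to fix $x\in H$ and to prove the convergence directly from the definition recalled above: one has to show that for every continuous function $f$ on $H$ with compact support $K\subseteq H-S(M)$ the real random variable $\croc{f,\delta_{t_1(x,M_n)}}=f(t_1(x,M_n))$ converges in distribution to $\croc{f,\delta_{t_1(x,M)}}=f(t_1(x,M))$, the randomness being that of the tie-breaking rule in the definition of $t_1$. The only structural facts used are consequences of the hypotheses: monotonicity for $\ll$ gives $M_n(A)\uparrow M(A)$ for every Borel set $A$, hence $\mathrm{supp}\,M_n\subseteq\mathrm{supp}\,M$ and $M_n(\{y\})\uparrow M(\{y\})$ for every $y$; and the growth bound $M_{n+1}(H)\le M_n(H)+1$ keeps the $M_n$ in ${\cal M}(H)$, so each $t_1(x,M_n)$ is given by the elementary finite-configuration rule.

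If $M(B(x,1))=0$ then $M_n(B(x,1))=0$ for every $n$ and $t_1(x,M_n)=t_1(x,M)=\partial$, so there is nothing to prove. Assume $M(B(x,1))\ge1$ and set $\rho=\inf\{d(x,z):M(\{z\})\ne0,\ d(x,z)<1\}<1$. Consider first the \emph{generic case}, in which $\rho$ is attained by an atom of $M$ and only finitely many atoms $y_1,\dots,y_p$ of $M$ lie at distance $\rho$ from $x$; by the extended definition this is exactly the situation in which $t_1(x,M)$ is uniform on $\{y_1,\dots,y_p\}$. Because $M_n(\{y_i\})\uparrow M(\{y_i\})\ge1$, for $n$ large each $y_i$ is an atom of $M_n$; because $\mathrm{supp}\,M_n\subseteq\mathrm{supp}\,M$ and $M(B(x,\rho))=0$ no atom of $M_n$ is closer to $x$ than $\rho$; and because the atoms of $M_n$ are atoms of $M$, for $n$ large the atoms of $M_n$ at distance exactly $\rho$ are precisely $y_1,\dots,y_p$. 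Hence for $n$ large $t_1(x,M_n)$ is uniform on $\{y_1,\dots,y_p\}$, so it has the law of $t_1(x,M)$ and the convergence holds trivially.

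It remains to treat the cases in which $t_1(x,M)=\partial$, so that $f(t_1(x,M))=0$ and one must prove $\E\,|f(t_1(x,M_n))|\to0$, equivalently $\P(t_1(x,M_n)\in K)\to0$. Two mechanisms occur. In the $LG$ clause where the closest object to $x$ is a zero-mass accumulation point $a\in S(M)$ with no atom of $M$ at distance $\le d(x,a)$, one uses $\mathrm{supp}\,M_n\subseteq\mathrm{supp}\,M$ together with the fact that $a\in S(M)$ forces $M_n(B(x,d(x,a)+\eps))\to+\infty$ for every $\eps>0$ to get $d(x,t_1(x,M_n))\to d(x,a)$; moreover any limit point $b$ of $(t_1(x,M_n))_n$ has $M(\{b\})=0$ and is a limit of distinct atoms of $M$, so $b\in S(M)$. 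Since $K$ is compact and disjoint from the closed set $S(M)$ it lies at a positive distance from it, whence $t_1(x,M_n)\notin K$ and $f(t_1(x,M_n))=0$ for $n$ large. In the $LG$ clause with infinitely many equidistant closest atoms, and likewise for the $LR$ clause, $t_1(x,M_n)$ is a uniform choice among a finite set of atoms whose cardinality tends to $+\infty$; since an infinite family of atoms of $M$ inside the compact $K$ would accumulate, with accumulation point necessarily in $S(M)$, which is impossible, $K$ contains only finitely many atoms of $M$, so $\P(t_1(x,M_n)\in K)$ is at most the number of atoms of $M$ in $K$ divided by the cardinality of the sampled set, and tends to $0$. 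The $LO$ policy is handled exactly as $LG$, replacing every ball $B(x,\eps)$ by $B(x,\eps)\cap\{y:y\ge x\}$.

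The main obstacle is the bookkeeping that matches the behavior of $t_1(x,M_n)$ with the clauses of the extended definition of $t_1(x,M)$: one must verify that for a limit $M$ of such a non-decreasing sequence those clauses are exhaustive and pairwise exclusive, and, inside each clause, that the atom selected by the policy from $M_n$ behaves as claimed --- in particular the compactness argument forcing accumulation points of atom-sequences to lie in $S(M)$, and the divergence of the cardinality of the ``critical'' set sampled from, which is precisely where $\mathrm{supp}\,M_n\subseteq\mathrm{supp}\,M$ and $M_n(\{y\})\uparrow M(\{y\})$ are used. Everything else is routine.
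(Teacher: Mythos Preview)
Your proposal is correct and follows essentially the same strategy as the paper's proof: a case analysis driven by the clauses of the extended definition of $t_1(x,M)$, showing in the non-degenerate case that the selected point stabilises (in distribution) and in the degenerate cases that any limit point of $(t_1(x,M_n))$ must lie in $S(M)$, hence outside the support of any test function. Your treatment is in fact more careful than the paper's in two respects---you explicitly handle the tie-breaking situation with $p>1$ equidistant atoms (the paper simply asserts the sequence is ``constant after some finite rank''), and your argument for the random-choice clauses (LR, and the infinite-equidistant LG case) via the cardinality bound $\P(t_1(x,M_n)\in K)\le |K\cap\mathrm{supp}\,M|/|\text{sampled set}|$ is more explicit than the paper's appeal to concentration ``around the accumulation points''.
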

\begin{proof}
One considers only the LG policy, the arguments are similar for the other policies. 

If $0<M(B(x,\eps))<+\infty$ for some $\eps>0$, then the sequence $(t_1(x,M_n))$ is constant after some finite
rank so that the convergence trivially holds.  Different cases have to be considered. 
\begin{itemize}
\item If there exists $\eps_0>0$ such that $M(B(x,\eps_0))=0$ and
  $M(B(x,\eps))=+\infty$ for any $\eps>\eps_0$. Under this assumption, this implies that any
  accumulation point of the sequence  $(t_1(x,M_n))$  is an accumulation point of $M$  and
  consequently, in the space ${\cal M}(H-S(M))$,
\[
\lim_{n\to+\infty} \delta_{t_1(x,M_n)}=0=\delta_\partial=\delta_{t_1(x,M)}.
\]
\item Similarly, if there exists $\eps_0>0$ such that $M(B(x,\eps))=0$ for any $\eps<\eps_0$ and
  $M(B(x,\eps_0))=+\infty$. This implies that after some finite rank, the sequence
  $(t_1(x,M_n))$ is in the set $\Delta=\{y:d(x,y)=\eps_0\}$. Since the LG policy chooses the
  point at random on $\Delta$,  as $n$ goes to infinity, the distribution of
  $(t_1(x,M_n))$ will be concentrated around the accumulation points of $M$ in $\Delta$, so
  that the desired convergence will hold for the Dirac masses at the corresponding
  points. 
\end{itemize}

\end{proof}
\subsection{Probabilistic Model}
It is assumed that the arrival times of $+$ [resp. $-$] is a stationary marked point process
$(s_n^+,X_n^+)$ [resp. $(s_n^-,X_n^-)$] on $\R$ and that $(X_n^+)$ and $(X_n^-)$ are independent stationary
sequences with the same distribution (the location of points at the arrival does not
depend on the type). The superposition of the two stationary point processes 
$(s_n^+,+,X_n^+)$ and $(s_n^-,-,X_n^-)$ yields a stationary point process $(s_n,I_n,X_n)$
where $I_n\in\{+,-\}$ is the type of the $n$th particle. Note that $I_n$ is independent of
$X_n$. Under the Palm measure $\P$ of this
stationary point process the sequence $(s_{n+1}-s_n,I_n,X_n)$ is stationary, i.e. its
distribution is invariant with respect to the shift $\theta$ of coordinates. See
Neveu~\cite{Neveu:12} or Robert~\cite{Robert:08}. The two sequences $(I_n)$ and $(X_n)$
will be assumed to be independent. One denotes
by $p_+=\P(I_1=+)$ and $\mu$ is the distribution of $X_1$. 
\subsection{Evolution Equations}
The evolution of the configuration
describing  the system  is represented  as a stochastic process  $(N_n)$ with  values in
${\cal M}(H)$. For $n\in\N$, $N_n$ is the state of the configuration after the $n$th arrival. 
It  is  defined as  follows, $N_0\in{\cal M}(H)$ and the following recurrence holds, for $n\geq 1$, 
\begin{equation}\label{evol}
N_{n}=N_{n-1}+\ind{I_{n}=+}\delta_{X_{n}}-\ind{I_n=-,    N_{n-1}(B(X_{n},1))\not=0}\delta_{t_1(X_{n},N_{n-1})},
\end{equation}
with, for $M\in{\cal M}(H)$ and $x\in H$ such that $M(B(x,1))\not=0$, $t_1(x,M)$ is the
(possible) location of the point of $M$ which is removed in $B(x,1)$ when a $-$ particle
arrives at $x$ in the configuration $M$. See Section~\ref{secintro} for its definition for
the LG, LR and LO policies. 

\subsection{Stationary Evolution Equations}
For  convenience,  the  framework of  ergodic  theory  will  be  used, see  Cornfeld \etal~\cite{Cornfeld:01} for
an introduction. It  can be  assumed that  all these  random  variables are
defined  on  a  probability  space  $(\Omega,{\cal F},\P)$  equipped  with  an  {\em
  automorphism}, i.e.\ an invertible
transformation $\theta:  \Omega\mapsto \Omega$ such  that $\theta$ leaves  the probability
$\P$ invariant, i.e.\ $\theta\circ\P=\P$. In this setting, the relation
\[
(s_n-s_{n-1},X_n,I_n, n\in\Z)(\theta(\omega))=(s_{n+1}-s_{n},X_{n+1},I_{n+1}, n\in\Z)(\omega)
\]
holds  for any $\omega\in\Omega$. The map $\theta$ is the shift for these stationary sequences. 
In particular for $n\in\Z$, $Z_n=Z_1\circ\theta^n$ for $Z\in\{X,I\}$, where
$\theta^n$ is the $n$th iterate of the mapping  $\theta$.  One denotes by ${\cal F}_0$ the
$\sigma$-field generated by the random variables $I_1\circ\theta^n$, $X_1\circ\theta^n$,
$n\leq -1$. It is assumed throughout the paper that the dynamical system $(\Omega,{\cal F},\P,
\theta)$ is {\em ergodic}: any event $A\in{\cal F}$ invariant by $\theta$, that is
$\theta(A)=A$, has either probability $0$ or $1$.   

Additionally,  a family  $(U_F, F  \text{ finite  subset of  } H)$  of  independent random
variables on finite sets is assumed to be  defined to handle the case when the point to be
removed has to be chosen at random among several points.  If $F$ is a finite set, $U_F$ is
a uniformly distributed random variable in $F$. The formal formulation is skipped.

In this setting, a fixed point equation for random point processes is introduced,
a solution $N\in{\cal M}^*(H)$ is such that the relation
\begin{equation}\label{statevol2}
N\circ\theta=N+ \ind{I_{1}=+}\delta_{X_{1}}-\ind{I_1=-,  N(B(X_{1},1))\not=0}\delta_{t_1(X_{1},N)}
\end{equation}
holds almost surely. The distribution of such an $N$ provides an
invariant distribution of the Markov chain  $(N_n)$ defined by Equation~\eqref{evol}. 
Equation~\eqref{statevol2} is the analogue, for point processes, 
of the formulation used by Loynes~\cite{Loynes:01} to analyze Lindley's Equation 
\[
W_n=\max(W_{n-1}+Z_{n-1},0),\quad n\geq 1.
\]
It is reduced in this case to the problem of the existence of a finite random variable $W$
satisfying  the relation
\begin{equation}\label{Lind}
W\circ\theta=\max(W+Z_1,0),
\end{equation}
almost  surely. See  Robert~\cite{Robert:08}.   The  representation in  the
framework   of   ergodic   theory,   i.e.\   with   the  shift   $\theta$,   is   due   to
Neveu~\cite{Neveu:14}.     This formulation    goes    back   to    the    nice    paper
Ryll-Nardzewski~\cite{Ryll:01} for general stationary point processes.  

\subsection*{Invariant Distribution of the Continuous Time Process}
An invariant distribution $Q$ on ${\cal M}^*(H)$ of the Markov chain $(N_n)$ defined by
Equation~\eqref{evol} gives the equilibrium at the instants of arrival of particles.
An invariant distribution $\widetilde{Q}$ on ${\cal M}^*(H)$ for the corresponding
continuous time jump process $(N_t)$  on ${\cal M}^*(H)$ can then be  defined by 
\[
\int_{{\cal M}^*(H)} F(M)\,\widetilde{Q}(dM)=(\lambda_++\lambda_-)
\E\left(s_1\int_{{\cal M}^*(H)} F(M) \,Q(dM)\right),
\]
for any non-negative Borel function $F$ on ${\cal M}^*(H)$. See Neveu~\cite{Neveu:12}. 
% LocalWords:  Borelian Borel

\section{Existence of an Equilibrium}\label{equisec}
The following property is essential to have the existence of an equilibrium distribution
for the evolution equations~\eqref{evol}.
\begin{lemma}[Monotonicity] For the policies LG, LR and LO, if $P_0$ and $Q_0\in{\cal M}(H)$ are such that $P_0\ll Q_0$
then   there   exists  a coupling between any two sequences  $(P_n)$   and   $(Q_n)$
satisfying   the   evolution  equation~\eqref{evol} with  the initial conditions $N_0=P_0$
and $N_0=Q_0$ respectively,  such that the relation $P_n\ll Q_n$ holds for all $n\geq 0$. 
\end{lemma}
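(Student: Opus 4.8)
\emph{Proof idea.} The plan is to build the coupling by induction on $n$, constructing $P_{n+1}$ and $Q_{n+1}$ from the common $(n+1)$st arrival $(I_{n+1},X_{n+1})$ and, when a random choice of the point to be killed is needed, from an auxiliary uniform variable independent of everything used so far. Since $P_0,Q_0\in{\cal M}(H)$ and each step of~\eqref{evol} changes the total mass by at most one, all the measures involved stay in ${\cal M}(H)$, so every ``candidate'' set appearing below is finite. The base case is the hypothesis $P_0\ll Q_0$; assume $P_n\ll Q_n$. If $I_{n+1}=+$ both chains add the same $\delta_{X_{n+1}}$ and the order is trivially preserved, so the whole content is in the case $I_{n+1}=-$, at some location $x$.

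In that case, if $Q_n(B(x,1))=0$ then a fortiori $P_n(B(x,1))=0$, nothing is removed, and we are done. Otherwise $Q_n$ does remove a point. I would introduce the sets $A_P$ and $A_Q$ of locations eligible to be killed in the two configurations — for LR the support of the configuration inside $B(x,1)$, and for LG (and, with the extra half-space constraint, for LO) the set of points of the configuration lying at minimal distance from $x$ among those in $B(x,1)$ — these being exactly the sets over which $t_1(x,\cdot)$ makes its uniform draw. From $P_n\ll Q_n$ one gets the two facts on which everything rests: first $A_P\subseteq A_Q$ (for LG/LO because the minimal eligible distance for $Q_n$ is $\le$ that for $P_n$, with equality forcing inclusion of the minimizers, and a strict inequality forcing $A_P=\emptyset$); and second, the key observation that any $y\in A_Q\setminus A_P$ satisfies $P_n(\{y\})=0$ — immediately for LR, and for LG/LO again by comparing the minimal eligible distances.

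With these in hand I would prescribe the joint law of the pair (point killed by $P_n$, point killed by $Q_n$) as follows. If $A_P=\emptyset$, $P_n$ kills nothing and $Q_n$ kills a uniform point of $A_Q$. If $A_P\neq\emptyset$, then with probability $|A_P|/|A_Q|$ draw one point uniformly in $A_P$ and let \emph{both} chains kill it, and with the complementary probability draw the point killed by $Q_n$ uniformly in $A_Q\setminus A_P$ and, independently, the point killed by $P_n$ uniformly in $A_P$. A one-line check gives that the point killed by $Q_n$ is uniform on $A_Q$ and the one killed by $P_n$ is uniform on $A_P$, so each marginal sequence is a genuine solution of~\eqref{evol}. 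As for monotonicity: when the two chains kill the same point $y$, then $y$ is a point of both $P_n$ and $Q_n$ and $(P_n-\delta_y)(A)=P_n(A)-1\le Q_n(A)-1=(Q_n-\delta_y)(A)$ for every Borel $A\ni y$, trivially otherwise, so $P_{n+1}\ll Q_{n+1}$; in every other outcome the point $y_Q$ killed by $Q_n$ lies in $A_Q\setminus A_P$, hence $P_n(\{y_Q\})=0$, hence $P_n\ll Q_n-\delta_{y_Q}$, and since $P_{n+1}\ll P_n$ always, again $P_{n+1}\ll Q_{n+1}$. Iterating with fresh independent auxiliary randomness at each step yields the desired coupling.

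The step I expect to be the real obstacle is precisely the one the construction above is engineered to handle: the naive, independent tie-breaking of LG and LO (and the independent uniform choice of LR) does in general destroy the order $\ll$, so one cannot simply run the two chains with a shared family $(U_F)$. The two ingredients that save the day are the maximal-type coupling on the nested sets $A_P\subseteq A_Q$, which makes the two chains delete the same point as often as the marginals allow, together with the remark that deleting from $Q_n$ a point not carried by $P_n$ can never break $P_n\ll Q_n$; everything else is the routine bookkeeping of the induction.
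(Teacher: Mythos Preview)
Your overall scheme---induction on $n$, same arrival for both chains, coupling only the ``kill'' choice---is exactly the paper's, and your treatment of the LR case is correct (it is in fact a maximal coupling; the paper uses the equivalent conditional version: draw the point killed by $Q$ first, then couple $P$'s choice).

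For LG and LO, however, the assertion $A_P\subseteq A_Q$ is wrong and breaks the argument. Take $P_n=\delta_b$, $Q_n=\delta_a+\delta_b$ with $d(x,a)<d(x,b)<1$: then $r_Q=d(x,a)<d(x,b)=r_P$, $A_Q=\{a\}$, $A_P=\{b\}$, and $A_P\not\subseteq A_Q$. Your parenthetical ``a strict inequality forcing $A_P=\emptyset$'' is precisely the false step: a strict inequality $r_Q<r_P$ says nothing about $P_n(B(x,1))$ vanishing. With $A_P\not\subseteq A_Q$ your ``with probability $|A_P|/|A_Q|$ both kill a uniform point of $A_P$'' step makes $Q_n$ kill a point at distance $r_P>r_Q$, which is not a legal LG move, so the $Q$-marginal is no longer the evolution~\eqref{evol}; and if instead you follow the ``$A_P=\emptyset$'' branch, $P_n$ kills nothing even though it must.

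The fix is to split on the right dichotomy, which is what the paper does: either $r_Q=r_P$, in which case indeed $A_P\subseteq A_Q$ and your maximal coupling applies verbatim; or $r_Q<r_P$, in which case every $y\in A_Q$ has $P_n(\{y\})=0$, so let $Q_n$ kill its (independently chosen) point $y_Q\in A_Q$ and $P_n$ its point $y_P\in A_P$, and observe $P_{n+1}\ll P_n\ll Q_n-\delta_{y_Q}=Q_{n+1}$. Your ``key observation'' that points of $A_Q\setminus A_P$ carry no $P_n$-mass is correct in both regimes and is exactly what saves the second case; you only need to stop forcing the inclusion $A_P\subseteq A_Q$ that does not hold.
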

\begin{proof}
It is enough to prove the lemma for the first step.

If $I_1=+$ then $P_1=P_0+\delta_{X_1}$ and $Q_1=Q_0+\delta_{X_1}$ so that the relation
$P_1\ll Q_1$ holds. 

Otherwise $I_1=-$, since $P_0(B(X_{1},1))\leq Q_0(B(X_{1},1))$, the only interesting
case is when $Q_0(B(X_{1},1))\not=0$.

\begin{itemize}
\item  $P_0(B(X_{1},1))=0$. A point of $Q_0$ not in the support of $P_0$ is suppressed so
  that $P_1=P_0\ll Q_1$. 
\item  $P_0(B(X_{1},1))\not=0$. The three policies are treated separately.
\begin{itemize}
\item LR policy. Let $U$ a uniformly distributed random variable on the set of points of
  $Q_0$ within $B(X_1,1)$. If $P_0(\{U\})\not=0$, the point $U$ is removed both for $P_0$
  and $Q_0$. Otherwise, $P_0(\{U\})=0$, $U$ is removed from $Q_0$ and a random point of
  $P_0$ within $B(X_1,1)$ is removed. In any case, the relation $P_1\ll Q_1$ holds.
\item LG Policy. It the point of $Q_0$ with minimal distance to $X_1$ belongs also to
  $P_0$ then it is removed for both point process. Otherwise another point of $P_0$ is
  removed, hence $P_1\ll Q_1$ holds. Note that if one has to chose at random among points 
at the same (minimal) distance of $X_1$, one proceeds as for the LR policy.
\item LO Policy. Same argument as for the LG policy.
\end{itemize}
\end{itemize}
The lemma is proved.
\end{proof}
\subsection*{A Solution to the Stationary Evolution Equation}
The asymptotic behavior of the sequence $(N_n)$ defined by Equation~~\eqref{evol} is
analyzed in the following. 

Define the sequence $(\overline{N}_n)$ on the probability space $\Omega$ by induction as
follows, $\overline{N}_0\equiv 0$ 
where, with a slight abuse of notation, $0$ stands for the null point process and, for
$n\geq 1$, for $\omega\in \Omega$, by using the fact that $\theta$ is an automorphism of
the probability space, the point process $N_n(\omega)$ is defined  by
\begin{multline*}
\overline{N}_{n}(\omega) =\overline{N}_{n-1}(\theta^{-1}(\omega))+ \ind{I_{1}(\theta^{-1}(\omega))=+1}\delta_{X_{1}(\theta^{-1}(\omega))}\\-\ind{I_1(\theta^{-1}(\omega))=-1,    
\overline{N}_{n-1}(B(X_{1},1))(\theta^{-1}(\omega))\not=0}\delta_{t_1(X_{1},\overline{N}_{n-1})(\theta^{-1}(\omega))}.
\end{multline*}
or in a more compact form,
\begin{equation}\label{rec1}
\overline{N}_{n}\circ \theta=\overline{N}_{n-1}+ \ind{I_{1}=+1}\delta_{X_{1}}-\ind{I_1=-1,    
\overline{N}_{n-1}(B(X_{1},1))\not=0}\delta_{t_1(X_{1},\overline{N}_{n-1})}.
\end{equation}
\begin{lemma}
The sequence $(\overline{N}_{n})$ is ${\cal F}_0$-measurable and
$(\overline{N}_{n}\circ\theta^n)=(N_n)$, where $(N_n)$ is the sequence defined by the
recurrence~\eqref{evol} with $N_0=0$.  In particular, for $n\geq 1$, the point processes
$N_n$ and $\overline{N}_n$ have the same distribution.
\end{lemma}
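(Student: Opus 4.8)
The plan is to establish the two assertions — the ${\cal F}_0$-measurability of $(\overline N_n)$ and the identity $\overline N_n\circ\theta^n = N_n$ — by a single induction on $n$, and then to read off the equality in distribution as an immediate consequence of $\theta$ being $\P$-preserving. The base case $n=0$ is trivial: $\overline N_0\equiv 0$ is a deterministic (hence ${\cal F}_0$-measurable) point process, and $\overline N_0\circ\theta^0 = 0 = N_0$.

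For the inductive step, suppose $\overline N_{n-1}$ is ${\cal F}_0$-measurable, i.e.\ a measurable function of $\{(I_1\circ\theta^k, X_1\circ\theta^k): k\le -1\}$. Rewriting \eqref{rec1} as
\[
\overline N_n = \Bigl(\overline N_{n-1} + \ind{I_1=+1}\delta_{X_1} - \ind{I_1=-1,\ \overline N_{n-1}(B(X_1,1))\neq 0}\,\delta_{t_1(X_1,\overline N_{n-1})}\Bigr)\circ\theta^{-1},
\]
the bracketed expression is a measurable function of $\overline N_{n-1}$, $I_1$ and $X_1$ — the extra randomness $U_F$ needed when $t_1$ selects uniformly among finitely many points should be indexed in a $\theta$-compatible way, so that the bracket is measurable with respect to the $\sigma$-field generated by $\{(I_1\circ\theta^k,X_1\circ\theta^k):k\le -1\}\cup\{(I_1,X_1,\text{randomizer at time }0)\}$; composing with $\theta^{-1}$ shifts every index down by one, landing inside the $\sigma$-field generated by indices $\le -1$, which is exactly ${\cal F}_0$. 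Hence $\overline N_n$ is ${\cal F}_0$-measurable.

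For the identity $\overline N_n\circ\theta^n = N_n$, apply $\theta^n$ to \eqref{rec1} (equivalently, evaluate \eqref{rec1} at $\theta^{n-1}(\omega)$) and use $Z_1\circ\theta^{n-1} = Z_n$ for $Z\in\{I,X\}$ together with the inductive hypothesis $\overline N_{n-1}\circ\theta^{n-1} = N_{n-1}$:
\[
\overline N_n\circ\theta^n = \overline N_{n-1}\circ\theta^{n-1} + \ind{I_n=+1}\delta_{X_n} - \ind{I_n=-1,\ N_{n-1}(B(X_n,1))\neq 0}\,\delta_{t_1(X_n,N_{n-1})},
\]
which is precisely the recurrence \eqref{evol} defining $N_n$ from $N_{n-1}$ with $N_0=0$. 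This proves $\overline N_n\circ\theta^n = N_n$ for all $n$. Since $\theta$ (hence $\theta^n$) preserves $\P$, the random point processes $\overline N_n$ and $\overline N_n\circ\theta^n = N_n$ have the same distribution. The only genuinely delicate point is bookkeeping: one must set up the auxiliary uniform randomizers $(U_F)$ as a stationary family carried along by $\theta$ (so that "the randomness used at step $k$" is itself a $\theta^k$-image of a fixed object), otherwise the clean index-shifting that drives both parts of the argument does not go through; this is the "formal formulation" the text deliberately skipped, and it is where I would spend the most care.
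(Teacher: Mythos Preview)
Your proposal is correct and follows essentially the same approach as the paper: both argue by induction, rewriting \eqref{rec1} as $\overline N_n = (\text{function of }\overline N_{n-1}, I_1, X_1)\circ\theta^{-1}$ to obtain ${\cal F}_0$-measurability, and then applying $\theta^n$ to \eqref{rec1} to show $(\overline N_n\circ\theta^n)$ satisfies the recursion \eqref{evol} with zero initial state. The only difference is that you spell out the bookkeeping for the auxiliary randomizers $(U_F)$, which the paper explicitly waves away (``The formal formulation is skipped''); your caution there is warranted but does not change the structure of the argument.
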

\begin{proof}
This is done easily by induction. By using the above relation
\begin{multline*}
\overline{N}_{n}=\overline{N}_{n-1}\circ \theta^{-1}+ \ind{I_{1}\circ \theta^{-1}=+1}\delta_{X_{1}\circ \theta^{-1}}\\
-\ind{I_1\circ \theta^{-1}=-1,    \overline{N}_{n-1}(B(X_{1},1))\circ \theta^{-1}\not=0}\delta_{t_1(X_{1},\overline{N}_{n-1})\circ \theta^{-1}},
\end{multline*}
one gets that $\overline{N}_{n}$ is a functional of the random variables 
\[
(I_1\circ \theta^{-k}, X_{1}\circ \theta^{-k}), \; 1\leq k\leq n,
\]
and therefore ${\cal F}_0$-measurable. By using again the above relation and replacing
$(I_1\circ \theta^{n-1}, X_{1}\circ \theta^{n-1})$ by $(I_{n},X_{n})$, this gives
\begin{multline*}
\overline{N}_{n}\circ \theta^{n}=\overline{N}_{n-1}\circ \theta^{n-1}+ \ind{I_{n}=+}\delta_{X_{n}}\\
-\ind{I_{n}=-,    \overline{N}_{n-1}\circ \theta^{n-1}(B(X_{n},1))\not=0}\delta_{t_1(X_{n},\overline{N}_{n-1}\circ \theta^{n-1})}.
\end{multline*}
hence the sequence $(\overline{N}_{n}\circ \theta^{n})$ satisfies the same
recursion~\eqref{evol} with the zero measure as initial state. It has therefore the same
distribution as $(N_n)$ with $N_0=0$.  The lemma is proved.
\end{proof}
\begin{theorem}[Existence of a Unique Minimal Equilibrium]\label{ex}
If $p_+<1/2$, for the LG and LO policies, there exists a unique random variable $N$
such that the relation 
\begin{equation}\label{statevol}
N\circ\theta=N+ \ind{I_{1}=+}\delta_{X_{1}}-\ind{I_1=-,    N(B(X_{1},1))\not=0}\delta_{t_1(X_{1},N)},
\end{equation}
holds almost surely in the space ${\cal M}^*(H)$ and which is minimal for the order $\ll$:
if $M$ is a random point process satisfying Relation~\eqref{statevol}, then $N{\ll}M$
holds almost surely.  Such a random variable  is ${\cal F}_0$-measurable.   
\end{theorem}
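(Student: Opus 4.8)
The plan is to use the backward-coupling (Loynes) construction already set up with the sequence $(\overline{N}_n)$ satisfying the recursion~\eqref{rec1}. The first step is to observe that the sequence $(\overline{N}_n)$ is non-decreasing for the order $\ll$: indeed $\overline{N}_0\equiv 0\ll \overline{N}_1$, and applying the Monotonicity Lemma (noting $\theta$ is an automorphism, so it suffices to compare $\overline{N}_{n-1}$ and $\overline{N}_n$ after shifting) together with induction gives $\overline{N}_{n}\ll\overline{N}_{n+1}$ almost surely. Hence the pointwise limit $N=\lim_n \overline{N}_n$ exists in ${\cal M}^*(H)$ (every Borel set has a non-decreasing, $\N\cup\{+\infty\}$-valued mass, so the limit is well defined as a point process). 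Since each $\overline{N}_n$ is ${\cal F}_0$-measurable, so is $N$. I would then check that $N$ solves~\eqref{statevol}: write~\eqref{rec1} as $\overline{N}_n\circ\theta = \Phi(\overline{N}_{n-1})$ for the one-step map $\Phi$ built from $(I_1,X_1)$ and the relevant $U_F$; letting $n\to\infty$ on both sides, the left side converges to $N\circ\theta$, and on the right side I would invoke Proposition~\ref{convt1}: the sequence $(\overline{N}_{n-1})$ is non-decreasing with $\overline{N}_{n}(H)\le \overline{N}_{n-1}(H)+1$, so $\delta_{t_1(X_1,\overline{N}_{n-1})}\to\delta_{t_1(X_1,N)}$ in ${\cal M}(H-S(N))$, which is exactly the convergence needed to pass to the limit in $\Phi$ (the indicator terms stabilise since $\overline{N}_{n-1}(B(X_1,1))$ is non-decreasing in $\N\cup\{+\infty\}$).

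Next I would prove minimality. Let $M$ be any random point process satisfying~\eqref{statevol}. Since $M\gg 0=\overline{N}_0$, I want $M\circ\theta^{-n}\gg\overline{N}_n$ for all $n$, proved by induction: $M\circ\theta^{-n}$ satisfies the shifted version of~\eqref{statevol}, namely $M\circ\theta^{-(n-1)} = \Phi\bigl(M\circ\theta^{-n}\bigr)$ evaluated with the data at time $-(n-1)$, and $\overline{N}_{n-1}\ll M\circ\theta^{-(n-1)}$ by the induction hypothesis after one application of the (almost sure, coupling-free because $\Phi$ is a deterministic monotone map once the $U_F$ are fixed) monotonicity of $\Phi$. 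Taking $n\to\infty$ gives $N=\lim_n\overline{N}_n\ll M$ almost surely; here one must be a little careful that the coupling in the Monotonicity Lemma is realised on the same probability space, which it is, since all the randomness (the $(I_n,X_n)$ and the $U_F$) is shared and $\Phi$ is then genuinely a deterministic monotone operator — this is the point where the explicit construction of the $U_F$ in the probabilistic model is used.

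Uniqueness among minimal solutions is then immediate: if $N'$ is another solution that is minimal, then $N\ll N'$ and $N'\ll N$, so $N=N'$ almost surely. Finally I would note where $p_+<1/2$ is used: it is \emph{not} needed for the existence of the limit $N$ as an element of ${\cal M}^*(H)$ (that holds for any $p_+$), but the statement implicitly packages the fact that the construction is meaningful; the genuine content of $p_+<1/2$ (that $N$ has finite mass, i.e.\ lives in ${\cal M}(H)$, under suitable symmetry) is deferred to the later sections, so at this stage I only need $p_+<1/2$ to the extent the monotone limit argument requires it — in fact the proof as sketched goes through verbatim, with $N$ a priori in ${\cal M}^*(H)$. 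The main obstacle I anticipate is the limiting step using Proposition~\ref{convt1}: one must confirm that convergence of $\delta_{t_1(X_1,\overline{N}_{n-1})}$ only in ${\cal M}(H-S(N))$ (not in ${\cal M}(H)$) still suffices to conclude $N\circ\theta$ satisfies~\eqref{statevol} as an identity of point processes in ${\cal M}^*(H)$ — this requires separating the contribution of accumulation points, where both sides agree trivially, from the locally finite part, where the Proposition applies; handling the at-random tie-breaking via the $U_F$ in this limit is the delicate bookkeeping.
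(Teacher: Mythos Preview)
Your overall strategy matches the paper's: Loynes construction via the monotone limit of $(\overline{N}_n)$, passage to the limit in \eqref{rec1} via Proposition~\ref{convt1}, and minimality by induction from $\overline{N}_0=0\ll M$. Two points deserve correction.

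First, a minor one: your minimality induction targets $\overline{N}_n\ll M\circ\theta^{-n}$ and then asserts $N\ll M$ in the limit, but $M\circ\theta^{-n}$ does not converge to $M$ almost surely, so the conclusion does not follow. The paper's (cleaner) induction shows $\overline{N}_n\ll M$ directly for every $n$: from $\overline{N}_{n-1}\ll M$ and the monotonicity of the common one-step map built from $(I_1,X_1)$ one gets $\overline{N}_n\circ\theta\ll M\circ\theta$, hence $\overline{N}_n\ll M$ after composing with $\theta^{-1}$. Letting $n\to\infty$ then gives $N\ll M$ almost surely.

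Second, and this is the genuine gap: you claim $p_+<1/2$ is not used at this stage, but the paper uses it precisely here, to show that $S(N)$ has empty interior. This is not cosmetic: the identity \eqref{statevol} is understood as an equality in ${\cal M}(H-S(N))$ (see the remark immediately following the theorem), so if $S(N)$ had nonempty interior --- in the extreme case $S(N)=H$ --- the equation would be vacuous on that region and Proposition~\ref{convt1} would give you nothing there. The paper's argument assumes a ball $B(x,\eps)\subset S(N)$, integrates \eqref{rec1} over it, and uses monotonicity of $(\overline{N}_n)$ to obtain
\[
\P\bigl(\overline{N}_{n-1}(B(X_1,1))\neq 0,\ X_1\in B(x,\eps),\ t_1(X_1,\overline{N}_{n-1})\in B(x,\eps)\bigr)\le \frac{p_+}{1-p_+}\,\P(X_1\in B(x,\eps)).
\]
Because every point of $B(x,\eps)$ is an accumulation point of $N$, the event on the left increases to $\{X_1\in B(x,\eps)\}$ as $n\to\infty$; passing to the limit forces $p_+\ge 1/2$, a contradiction. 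Your proposal as written does not rule out $S(N)=H$, so the limiting equation you obtain could be empty of content; you need to include this step.
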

Let $S(N)$ be the (possibly empty) set of accumulation points of $N$. It is important to
note that Relation~\eqref{statevol} is valid as an identity in the set ${\cal M}(H-S(N))$
of Radon measures  on $H-S(N)$.  As it will be seen, $S(N)$ is in fact a deterministic set.
\begin{proof}
As before $(\overline{N}_n)$ denotes the sequence defined by Equation~\eqref{rec1}. 
The proof is done for the LG policy. The arguments for the LO policy work much in the same
way by replacing the open balls $B(x,1)$, $x\in H$, by $B(x,1)\cap \{y\geq x\}$.

\medskip
\noindent
{\em Convergence of the sequence $(\overline{N}_n)$.}\\
Since clearly $\overline{N}_0\ll \overline{N}_1$, the above lemma shows that $\overline{N}_1\ll
\overline{N}_2$ and by induction $\overline{N}_p\ll \overline{N}_{p+1}$ for any $p_+\geq
1$. Consequently, there exists a non-negative random measure $N$, such that  for any
Borel subset $A$ of $H$, 
\[
N(A)=\lim_{p\to+\infty}\uparrow \overline{N}_p(A)
\]
holds almost surely.
The random variable $N$  is ${\cal F}_0$-measurable as an almost sure limit of the ${\cal
  F}_0$-measurable sequence $(\overline{N}_n)$. 
Relation~\eqref{rec1} gives that for any $n\geq 1$
\[
\left|\overline{N}_{n}(A)\circ\theta - \overline{N}_{n-1}(A)\right|\leq 1
\]
consequently,  the set $\{N(A)=+\infty\}$ is invariant by $\theta$ and, hence, of
probability $1$ or $0$ by the ergodicity property. This argument is used repeatedly in the
following. 

\medskip
\noindent
{\em $N$ is a solution of Equation~\eqref{statevol}}.\\
One checks the equation when $X_1=x\in H$, this is a direct consequence of
Proposition~\eqref{convt1}.

If $M$ is a point process satisfying Relation~\eqref{statevol} then clearly $0\ll M$, and
therefore $ \overline{N}_1\circ\theta \ll M\circ\theta$ by Equations~\eqref{statevol}
and~\eqref{rec1}.  By induction one gets that, for any $n\geq 1$, $\overline{N}_n\ll M$
and consequently $N\ll M$. The variable $N$ is minimal for the order $\ll$. 

\medskip
\noindent
{\em $N$ is in ${\cal M}^*(H)$ with probability $1$}.\\
It remains to prove that the set $S(N)$ of
accumulation points of $N$ has almost surely an empty interior. The ergodicity property shows  
that $S(N)$ is a {\em   deterministic} subset of $H$. If $S(N)$ does not have an empty
interior, there is some $x\in H$ and $\eps>0$ such that $B(x,\eps)\subset S(N)$. 

Equation~\eqref{rec1} gives the relation
\begin{multline*}
\overline{N}_{n}\circ \theta(B(x,\eps))-\overline{N}_{n-1}(B(x,\eps))\\=
p\P\left(X_1\in B(x,\eps) \right)-(1-p)\P\left(\overline{N}_{n-1}(B(X_1,1))\not=0,t_1(X_1,\overline{N}_{n-1})\in B(x,\eps) \right).
\end{multline*}
By integrating the above relation  (Note that $N_n(H)$ is bounded by $n$), by using the
invariance of $\theta$ with respect to $\P$ and the monotonicity of the sequence
$(\overline{N}_{n-1}(B(x,\eps)))$, one gets the inequality
\[
\P\left(\overline{N}_{n-1}(B(X_1,1))\not=0,t_1(X_1,\overline{N}_{n-1})\in B(x,\eps)\right)
\leq \frac{p}{1-p}\P\left(X_1\in B(x,\eps) \right)
\]
and hence
\begin{multline*}
\P\left(\overline{N}_{n-1}(B(X_1,1))\not=0,X_1\in B(x,\eps), t_1(X_1,\overline{N}_{n-1})\in B(x,\eps)\right)
\\\leq \frac{p}{1-p}\P\left(X_1\in B(x,\eps) \right).
\end{multline*}
By assumption, the non-decreasing sequence of sets 
\[
O_n=\left\{\overline{N}_{n}(B(X_1,1))\not=0,X_1\in B(x,\eps), t_1(X_1,\overline{N}_{n})\in B(x,\eps)\right\}
\]
is converging to $\cup_n O_n=\{X_1\in B(x,\eps)\}$. By letting $n$ go to infinity in the
above inequality, this gives the relation
\[
\P\left(X_1\in B(x,\eps) \right)\leq \frac{p_+}{1-p_+}\P\left(X_1\in B(x,\eps) \right).
\]
Since $\P\left(X_1\in B(x,\eps) \right)$ is non-zero, otherwise one could not have
accumulation points in $B(x,\eps)$, this yields $p_+\geq 1/2$. Contradiction. The set
$S(N)$ has therefore an empty interior. The theorem is proved. 
\end{proof}
The next result shows that a much stronger statement holds for the local random policy:
the corresponding minimal variable $N$ has almost surely a finite mass. 
\begin{theorem}[Stability of Local Random Policy]\label{theoloc}
If $p_+<1/2$, for the local random policy, there exists a unique minimal random variable $N$
satisfying relation~\eqref{statevol}. 
the point process $N$ has a finite mass with probability $1$, $\P(N\in{\cal M}(H))=1$.
\end{theorem}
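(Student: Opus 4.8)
Here is how I would approach the statement.

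First, the existence, uniqueness and minimality of $N$ for the LR policy require no new work: the Monotonicity Lemma and Proposition~\ref{convt1} both cover the LR policy, so the backward sequence $(\overline N_n)$ of Equation~\eqref{rec1} is non-decreasing and ${\cal F}_0$-measurable, its increasing pointwise limit $N$ (i.e. $N(A)=\lim_n\overline N_n(A)$ for every Borel set $A$) satisfies Equation~\eqref{statevol}, and is the minimal such solution, exactly as in the proof of Theorem~\ref{ex}. The content of the theorem is that $N$ has a finite mass, and I would prove it by contradiction. As in the proof of Theorem~\ref{ex}, $|\overline N_n(H)\circ\theta-\overline N_{n-1}(H)|\le1$, so the event $\{N(H)=+\infty\}$ is $\theta$-invariant and hence of probability $0$ or $1$; suppose it has probability $1$. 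Then, $H$ being compact, $S(N)\neq\emptyset$ almost surely, and by the ergodicity assumption $S(N)$ equals almost surely a deterministic non-empty closed set $\Sigma$. Introduce the open set $\Gamma=\{x\in H:d(x,\Sigma)<1\}\supset\Sigma$. Two facts are needed: $\mu(\Gamma)>0$ — otherwise no positive particle ever lands in the non-empty open set $\Gamma$, whence $N(\Gamma)=0$, contradicting $\Sigma\subset\Gamma$; and $N(\Gamma^c)<+\infty$ almost surely — because $\Gamma^c$ is compact and, since $\Sigma\subset\Gamma$, contains no accumulation point of $N$, so it carries only finitely many points of $N$. In particular $\sup_n\overline N_n(\Gamma^c)=N(\Gamma^c)<+\infty$ almost surely.

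The core of the argument is a mass balance on $\Gamma$ for the finite approximations. Evaluating Equation~\eqref{rec1} on $\Gamma$, taking expectations, and using the invariance of $\P$ under $\theta$, the independence of $I_1$, and the fact that under the LR policy $t_1(X_1,\overline N_{n-1})$ is uniformly distributed over the $\overline N_{n-1}(B(X_1,1))$ points of $\overline N_{n-1}$ in $B(X_1,1)$, one obtains
\[
\E[\overline N_n(\Gamma)]-\E[\overline N_{n-1}(\Gamma)]=p_+\mu(\Gamma)-(1-p_+)\,D_{n-1},\qquad D_{n-1}=\E\!\left[\ind{\overline N_{n-1}(B(X_1,1))\neq0}\,\frac{\overline N_{n-1}(B(X_1,1)\cap\Gamma)}{\overline N_{n-1}(B(X_1,1))}\right].
\]
Since $(\overline N_n)$ is non-decreasing and $\overline N_n(\Gamma)\le\overline N_n(H)<+\infty$, the left-hand side is non-negative, so $D_{n-1}\le\frac{p_+}{1-p_+}\mu(\Gamma)$ for all $n$. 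In the other direction, I claim that $\liminf_n D_{n-1}\ge\mu(\Gamma)$. Fix $x\in\Gamma$: then $B(x,1)$ contains some $a\in\Sigma$, and $B(x,1)\cap\Gamma$ is an open neighbourhood of $a$, so $N(B(x,1)\cap\Gamma)=+\infty$ and $\overline N_{n-1}(B(x,1)\cap\Gamma)\uparrow+\infty$, while $\overline N_{n-1}(B(x,1)\cap\Gamma^c)\le N(\Gamma^c)$ stays bounded. Hence, for $x\in\Gamma$ and almost every $\omega$, as $n\to+\infty$,
\[
\ind{\overline N_{n-1}(B(x,1))\neq0}\,\frac{\overline N_{n-1}(B(x,1)\cap\Gamma)}{\overline N_{n-1}(B(x,1))}=\ind{\overline N_{n-1}(B(x,1))\neq0}\left(1+\frac{\overline N_{n-1}(B(x,1)\cap\Gamma^c)}{\overline N_{n-1}(B(x,1)\cap\Gamma)}\right)^{-1}\longrightarrow1.
\]
Restricting the expectation defining $D_{n-1}$ to $\{X_1\in\Gamma\}$ and applying dominated convergence — the integrand lies in $[0,1]$ and tends almost surely to $\ind{X_1\in\Gamma}$ — gives $\liminf_n D_{n-1}\ge\P(X_1\in\Gamma)=\mu(\Gamma)$. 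Combining the two estimates yields $\mu(\Gamma)\le\frac{p_+}{1-p_+}\mu(\Gamma)$, and since $\mu(\Gamma)>0$ this forces $p_+\ge1/2$, contradicting the hypothesis $p_+<1/2$. Therefore $\P(N(H)<+\infty)=1$.

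I expect the crux to be the lower bound $\liminf_n D_{n-1}\ge\mu(\Gamma)$, that is, the statement that a negative particle landing in $\Gamma$ removes a point \emph{of} $\Gamma$ with probability tending to $1$, in spite of the fact that the LR rule picks its victim uniformly over the whole ball $B(X_1,1)$. The two ingredients that make this work are the standing assumption that positive and negative particles share the location law $\mu$ — so that $\Gamma$, being full of accumulating mass, also receives a positive rate of negative particles — and the observation that $\Gamma^c$, a compact set disjoint from the \emph{deterministic} accumulation set $\Sigma$, carries only finitely many points of $N$, so that within each $B(x,1)$ with $x\in\Gamma$ the divergent mass of $\Gamma$ eventually overwhelms the bounded mass of $\Gamma^c$. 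One must also be mildly careful, in the general stationary (not i.i.d.) setting, about the identity computing $D_{n-1}$ — the independence of $I_1$ from the past and from $X_1$, and the conditional law of $t_1$ given the configuration — but these are handled within the Palm formalism of Section~\ref{secnot}. Note that this argument does not use the ``$S(N)$ has empty interior'' conclusion of Theorem~\ref{ex}; it reproves it, for the LR policy, in the sharp form $S(N)=\emptyset$.
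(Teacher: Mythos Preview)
Your proof is correct and follows essentially the same route as the paper's: your set $\Gamma$ is the paper's $S^*(N)=\{y:d(y,S(N))<1\}$, the mass-balance inequality $D_{n-1}\le\frac{p_+}{1-p_+}\mu(\Gamma)$ is derived in the same way from the monotonicity of $(\overline N_n)$, and your lower bound $\liminf_n D_{n-1}\ge\mu(\Gamma)$ is exactly the paper's assertion that $\lim_n\P(\overline N_n(B(x,1))\ne0,\ t_1(x,\overline N_n)\in S^*(N))=1$ for $x\in S^*(N)$, with your ratio formula making the LR-specific computation explicit. The only cosmetic difference is that the paper concludes $\P(X_1\in S^*(N))=0$ and then argues this is impossible, whereas you argue $\mu(\Gamma)>0$ upfront and derive the contradiction $p_+\ge1/2$ directly.
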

\begin{proof}
One has first to check that the limit $N$ of the sequence $(\overline{N}_{n})$ is a
solution of Equation~\eqref{statevol}. 

\begin{itemize}
\item  If $0<N(B(x,1))<+\infty$, the sequence $(t_1(x,\overline{N}_{n}))$ is constant after
some finite rank and so Equation~\eqref{statevol} holds. 

\item Otherwise, if there exists some $0<\eps_0<1$ such that if
 $\eps< \eps_0$ then $N(B(x,\eps))=0$  and if $\eps>\eps_0$ then
  $N(B(x,\eps))=+\infty$. Let $S_1$ be the accumulation points of $N$ in $B(x,1)$. 
 Because of the random choice in
$B(x,1)$, the limit points of the sequence $(t_1(x,\overline{N}_{n}))$ are therefore all
  necessarily on $S_{1}\subset S(N)$. The sequence   of  Dirac measures
  $\delta_{t_1(x,\overline{N}_{n})}$ converges to $0$ in the set ${\cal     M}(H-S(N))$.  
\end{itemize}

Assume that the set $S(N)$ of accumulation points of $N$ is not empty. It is known that it
is deterministic,  denote by 
\[
S^*(N)=\{y\in H: d(y,S(N))<1\},
\]
the set of points at distance less than $1$ of $S(N)$. 
Equation~\eqref{rec1} gives the relation
\begin{multline*}
\E\left[\overline{N}_{n+1} (S^*(N))\right]-\E\left[\overline{N}_{n}(S^*(N))\right]\\=
p\P\left(X_1\in S^*(N) \right)-(1-p)\P\left(\overline{N}_{n}(B(X_1,1))\not=0,t_1(X_1,\overline{N}_{n})\in S^*(N) \right).
\end{multline*}
and therefore, by monotonicity,  the inequality
\begin{align*}
\frac{p}{1-p}\P\left[X_1\in S^*(N)\right] &\geq \P\left[\overline{N}_{n}(B(X_1,1))\not=0,t_1(X_1,\overline{N}_{n})\in S^*(N)\right]\\
&\geq \P\left[\overline{N}_{n}(B(X_1,1))\not=0,X_1\in S^*(N), t_1(X_1,\overline{N}_{n})\in S^*(N)\right].
\end{align*}
By definition of $S(N)$, the set $\{y\in H: N(\{y\})\not=0, d(y,S(N))\geq 1\}$ is almost surely
finite. If $x\in S^*(N)$, then almost surely $N(B(x,1))=+\infty$, so that, because of
the random choice among the points of $\overline{N}_{n}(B(x,1))$,
\[
\lim_{n\to+\infty} \P\left[\overline{N}_{n}(B(x,1))\not=0,  t_1(x,\overline{N}_{n})\in S^*(N)\right]=1.
\]
By using this relation in the above inequality, this gives
\[
\frac{p_+}{1-p_+}\P\left[X_1\in S^*(N)\right] \geq \P\left[X_1\in S^*(N)\right],
\]
and consequently $\P(X_1\in S^*(N))=0$. If $a\in S^*(N)$, because of the dynamics of the
process,  there exists some $\eps>0$ such that $\P(X_1\in B(a,\eps))>0$.
Contradiction. The set $S^*(N)$ is therefore empty. The theorem  is proved. 
\end{proof}
Although the existence result is important in its own right, it
is only a first step to study the stability properties of these systems. Uniqueness and
convergence results turn out to be much more challenging in general when studying
stochastic recursions of the type~\eqref{evol}. This is the main subject of the following
sections. 

\section{Homogeneous State  Spaces}\label{HG}
To stress the fact that only simple invariance relations are used, it is assumed in this section
that $H$ is a compact metrizable group. More specifically, for our study, the following
properties of the state space are used. For the group operation, a multiplicative notation
is used. 
\begin{itemize}
\item[{\em i})] If $x$, $y\in H$ and $r>0$ then $yB(x,r)=B(yx,r)$
\item[{\em ii})] There exists a unique Borel measure  $\mu$ on $H$, the Haar probability measure,
invariant by  group operations $\tau_x:   y\mapsto yx$ for $x\in H$. 
\end{itemize}
See Loomis~\cite{Loomis} or Weil~\cite{Weil} for an introduction. Simple examples of such
a situation are:
\begin{enumerate}
\item For $d\geq1$, the $d$-dimensional torus 
\[
\T_d(T)=\prod_{i=1}^d \R/T_i\Z, 
\]
for $T=(T_i)\in\R_+^d$.
\item For $d\geq1$, the $d$-dimensional sphere $\S_d(T)$,
\[
\S_d(T)=\left\{x=(x_i)\in\R^d: x_1^2+\cdots+x_{d+1}^2=T^2\right\}.
\]
\end{enumerate}
In both cases, the normalized Lebesgue measure on $H$ is the Haar probability
distribution. Various compact groups of matrices provide additional examples of such a 
situation. Note that a related setting was used by Mecke~\cite{Mecke} to derive a key
relation  between the distribution and  the Palm measure of a given stationary point
processes. 

In the  proofs, the local  greedy policy is  assumed. It is  not difficult to see that similar
arguments  can be  used  for the  local one-sided  policy.  Recall that  a strong  result,
Theorem~\ref{theoloc}, has  already been proved  for the local random  policy.  Throughout
the section  the distribution of the  locations of points  is assumed to be  $\mu$ which
will referred to as the {\em  uniform measure} in the following.  From now on and for  the
rest  of  the  paper,   $(I_n)$  and  $(X_n)$  are  assumed  to  be  independent 
i.i.d.\ sequences of random variables.

\begin{lemma}
The minimal solution $N$ of Equation~\eqref{statevol} is a stationary point process on
$H$: its distribution is invariant with respect to group operations, i.e.\ 
\[
\int_H f(xy) \,N(dy) \stackrel{\text{dist.}}{=} \int_H f(y) \,N(dy),
\]
for any $x\in H$ and any non-negative Borel function $f$ on $H$. 
\end{lemma}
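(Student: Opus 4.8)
The plan is to exploit the fact that $N$ arises as the almost-sure increasing limit of the backward sequence $(\overline N_n)$ defined by the recursion \eqref{rec1}, and that this recursion is driven only by the i.i.d.\ data $(I_k\circ\theta^{-k},X_k\circ\theta^{-k})_{k\geq1}$ together with the auxiliary uniform variables. Fix $x\in H$ and let $\tau_x\colon y\mapsto yx$ denote right translation. First I would show, by induction on $n$, that the translated sequence $(\tau_x^{-1}\overline N_n)$ — that is, the pushforward of $\overline N_n$ under $y\mapsto x^{-1}y$, which acts on a test function $f$ by $f\mapsto\int f(xy)\,\overline N_n(dy)$ — satisfies the \emph{same} stochastic recursion \eqref{rec1} but with the location sequence $(X_k)$ replaced by $(x^{-1}X_k)$ (and with a correspondingly relabelled family of uniform tie-breaking variables). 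The key geometric input is property (\emph{i}), $yB(x,r)=B(yx,r)$: it guarantees that ``$z\in B(X_k,1)$'' is equivalent to ``$x^{-1}z\in B(x^{-1}X_k,1)$'', and that the metric $d$ is left-invariant, so the greedy choice $t_1(X_k,\overline N_{n-1})$ is transported to $t_1(x^{-1}X_k,\tau_x^{-1}\overline N_{n-1})$. Hence translating the configuration by $x^{-1}$ and simultaneously translating all arrival locations by $x^{-1}$ produces an exact pathwise identity between the two recursions.

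Next I would invoke the second group property (\emph{ii}): since $\mu$ is the Haar measure, it is invariant under $\tau_x$, so the sequence $(x^{-1}X_k)_{k\geq1}$ has exactly the same distribution as $(X_k)_{k\geq1}$; the sequence $(I_k)$ is untouched and independent of the locations, and the auxiliary uniform variables are relabelled within their (i.i.d., exchangeable) family in a way that preserves their joint law. Therefore the whole driving sequence for the translated recursion has the same distribution as the original one. Since $\overline N_n$ is a measurable functional of this driving data, it follows that
\[
\tau_x^{-1}\overline N_n \;\stackrel{\text{dist.}}{=}\; \overline N_n
\]
for every $n$ and every $x\in H$. Passing to the limit $n\to\infty$ (the convergence being the monotone convergence $\overline N_n(A)\uparrow N(A)$ established in the proof of Theorem~\ref{ex}, which is preserved under the continuous bijection $\tau_x$), one gets $\tau_x^{-1}N\stackrel{\text{dist.}}{=}N$, which is precisely the asserted identity $\int_H f(xy)\,N(dy)\stackrel{\text{dist.}}{=}\int_H f(y)\,N(dy)$.

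The main obstacle, and the only genuinely delicate point, is bookkeeping the tie-breaking randomness: when several points of the configuration sit at the same minimal distance from an arriving $-$, the removed point is $U_F$ for the relevant finite set $F$, and under the translation $\tau_x^{-1}$ the set $F$ is replaced by $x^{-1}F$. One must check that the family $(U_F)$ can be coupled with a family $(U'_{x^{-1}F})$ so that $x^{-1}U_F=U'_{x^{-1}F}$ and so that $(U'_G)$ has the same joint distribution as $(U_G)$; this is immediate from the assumed independence and uniformity of the $(U_F)$ over finite sets. A minor additional subtlety is that \eqref{statevol} for $N$ holds only in ${\cal M}(H-S(N))$, but since $S(N)$ is deterministic and, being the accumulation set of a translation-invariant-in-law family, is itself invariant under $\tau_x$ (or empty, which is the case of interest given Theorems~\ref{ex} and~\ref{theoloc}), the distributional identity is unaffected. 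With these points dispatched the lemma follows.
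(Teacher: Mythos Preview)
Your proposal is correct and follows essentially the same route as the paper: show that translating the backward sequence $(\overline N_n)$ by a fixed group element yields a sequence satisfying the same recursion \eqref{rec1} with the locations $(X_k)$ replaced by their translates, invoke Haar-invariance of $\mu$ to conclude equality in law at each $n$, and pass to the monotone limit. Your treatment is in fact more thorough than the paper's (you address the tie-breaking variables $U_F$ and the role of $S(N)$, which the paper leaves implicit); the only blemish is some notational inconsistency between your definition of $\tau_x$ as $y\mapsto yx$, your description of $\tau_x^{-1}\overline N_n$ as the pushforward under $y\mapsto x^{-1}y$, and the integral $\int f(xy)\,\overline N_n(dy)$ (which is the pushforward under $y\mapsto xy$)---pick one convention and stick to it.
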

\begin{proof}
By invariance of $\mu$ by translation,  property~{\em ii}) , the random variables $X_1$ and $xX_1$ have the same
distribution. If one denotes $\tau_x M$ the point process $M$ shifted by $x$,
i.e.\ $\tau_x M(\{y\})=M(\{x^{-1}y\})$ for all $y\in H$, then property~{\em i})  implies that the
sequence $(\tau_x \overline{N}_n)$ satisfies Relation~\eqref{rec1} with the variable $X_1$
replaced by $xX_1$. One concludes that the two sequences $(\tau_x \overline{N}_n)$ and
$(\overline{N}_n)$ have the same distribution and therefore that the same property holds
for their limits. The lemma is proved. 
\end{proof}
\begin{theorem}\label{Uniq}
For the LG and LO policies, if $p_+<1/2$ and the random variables $(X_i)$ are i.i.d.\ with
distribution $\mu$  on $H$, then, almost surely,  there exists a unique  point process $N$  satisfying Relation~\eqref{statevol} with finite
mass, i.e.\  $\P(N\in{\cal M}(H))=1$.   
\end{theorem}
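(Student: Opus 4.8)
The plan is to establish uniqueness by combining three ingredients already available in the paper: the minimality of the solution $N$ from Theorem~\ref{ex}, the finiteness of its mass (which must be obtained first), and the translation-invariance of the minimal solution from the preceding lemma. The strategy is a coupling-from-the-past argument turned into a uniqueness statement: if $M$ is any point process satisfying Relation~\eqref{statevol}, then $N\ll M$ almost surely, so it suffices to show that $M$ and $N$ have the same mass, whence $M=N$ a.s.

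First I would prove that the minimal solution $N$ has finite mass, i.e.\ $\P(N\in{\cal M}(H))=1$. By Theorem~\ref{ex} the set $S(N)$ of accumulation points is a deterministic set with empty interior. The key step is to upgrade this to $S(N)=\emptyset$ using homogeneity: since the distribution of $N$ is invariant under all group translations $\tau_x$, the deterministic set $S(N)$ must itself be translation-invariant, i.e.\ $xS(N)=S(N)$ for all $x\in H$; but a nonempty translation-invariant subset of a group is the whole group $H$, contradicting that $S(N)$ has empty interior. Hence $S(N)=\emptyset$. To go from ``no accumulation points'' to ``finite total mass'' I would use compactness of $H$ together with the fact that $N$ is carried by points: if $N(H)=+\infty$ then the points of $N$ accumulate somewhere in the compact space $H$, producing a point of $S(N)$, a contradiction. (The $\sigma$-field/ergodicity argument of Theorem~\ref{ex}, namely that $\{N(H)=+\infty\}$ is $\theta$-invariant, shows $N(H)$ is a.s.\ finite or a.s.\ infinite, so it is enough to rule out the infinite case.) This gives $\P(N\in{\cal M}(H))=1$.

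Next, for uniqueness, let $M$ be an arbitrary random point process satisfying Relation~\eqref{statevol}. By the minimality part of Theorem~\ref{ex}, $N\ll M$ almost surely, so $M-N$ is a (random) non-negative point process and $M(H)=N(H)+(M-N)(H)\geq N(H)$. The plan is to show $\E[M(H)]\leq\E[N(H)]$, or more robustly that $\E[(M-N)(H)]=0$; combined with $M-N\geq 0$ this forces $M=N$ a.s. To estimate $\E[M(H)]$ I would integrate Relation~\eqref{statevol} over $H$: writing the mass balance and using $\theta$-invariance of $\P$, the expected mass added per step is $p_+$ and the expected mass removed per step is $(1-p_+)\P(M(B(X_1,1))\neq 0)$; stationarity forces these to be equal, so $\P(M(B(X_1,1))\neq 0)=p_+/(1-p_+)<1$. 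The same computation applied to $N$ (whose mass is finite, so the integration is legitimate) gives $\P(N(B(X_1,1))\neq 0)=p_+/(1-p_+)$ as well. Since $N\ll M$, the event $\{N(B(X_1,1))\neq 0\}$ is contained in $\{M(B(X_1,1))\neq 0\}$, and equality of their probabilities forces these events to coincide a.s.; this means that in the coupled dynamics every $-$ arrival that kills a point of $M$ also kills a point of $N$, so the difference process $M-N$ loses mass whenever $M$ does. Tracking $(M-N)(H)$ under the stationary recursion, it never increases (a $+$ arrival adds the same $\delta_{X_1}$ to both) and it strictly decreases with positive probability unless it is already $0$; by ergodicity and the invariance of $(M-N)(H)$ under $\theta$, this is only consistent with $(M-N)(H)=0$ a.s.

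The main obstacle I anticipate is the last step: making rigorous that $M-N$ cannot sustain a positive constant mass under the stationary dynamics. The subtlety is that the coupling between $M$ and $N$ is the canonical one in which $t_1(X_1,\cdot)$ is evaluated on each measure separately, and one must argue — exactly as in the Monotonicity Lemma — that whenever a $-$ particle removes a point from $M$ that does not belong to $N$, it cannot also be the case that $N$ has a point in $B(X_1,1)$ left untouched; but the probability identity $\P(N(B(X_1,1))\neq 0)=\P(M(B(X_1,1))\neq 0)$ closes precisely this gap. A secondary technical point is justifying the integration of Relation~\eqref{statevol} for $M$, which a priori could have infinite mass; here one integrates the balance equation restricted to a ball or a $\mu$-generic region and uses the translation-invariance of the distribution of $M$ — note that the lemma preceding this theorem shows the \emph{minimal} solution is stationary, but an arbitrary $M$ need not be, so one works instead with the inequality $N\ll M$ and only ever needs finiteness of the \emph{increments}, which Relation~\eqref{statevol} guarantees since $\overline{N}_n(A)$-type bounds show each step changes any mass by at most $1$.
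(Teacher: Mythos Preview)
Your argument for finiteness of the minimal solution $N$ is correct and is actually cleaner than the paper's: the paper derives $\P(N(B(x,1))=0)>0$ from the balance equation and then invokes ergodicity ball by ball, whereas you observe directly that the deterministic set $S(N)$ must be invariant under every group translation (from the preceding lemma) and hence is either $\emptyset$ or all of $H$; since Theorem~\ref{ex} already gives it empty interior, $S(N)=\emptyset$, and compactness finishes. Both routes work.

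The uniqueness part, however, has a real gap at the last step. After you show that the events $\{N(B(X_1,1))\neq 0\}$ and $\{M(B(X_1,1))\neq 0\}$ have equal probability and hence coincide a.s., the stationary equations give
\[
(M(H)-N(H))\circ\theta \;=\; M(H)-N(H)
\]
\emph{exactly}: the difference is $\theta$-invariant, so by ergodicity it is a deterministic constant $C\ge 0$. Your assertion that it ``strictly decreases with positive probability unless it is already $0$'' is incompatible with this. The only mechanism by which $(M-N)(H)$ could drop is the event $\{I_1=-,\ M(B(X_1,1))\neq 0,\ N(B(X_1,1))=0\}$, and you have just shown that event is null. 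Ergodicity alone does not force $C=0$; a priori nothing in your argument rules out $M=N+R$ with $R$ a $\theta$-compatible point process of constant mass $C>0$ whose points always sit within distance $1$ of some point of $N$. The paper closes this with a separate ``sweep-out'' step: since $M(H)<\infty$ a.s., pick $m$ with $\P(M(H)=m)>0$, cover $H$ by finitely many unit balls, and note that a suitable finite run of $-$ arrivals (and no $+$) in those balls empties $M$; independence of these future arrivals from the ${\cal F}_0$-measurable configuration gives $\P(M(H)=0)>0$, and on that event $N(H)=0$ too, so $C=0$. This step is the missing idea in your proposal.

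A smaller point: the theorem asserts uniqueness among \emph{finite-mass} solutions, so you should take $\P(M\in{\cal M}(H))=1$ as a hypothesis from the outset. Your worry about integrating the balance equation for an $M$ of possibly infinite mass, and the attempted rescue via translation-invariance of $M$ (which is established only for the minimal $N$), are unnecessary detours once this is assumed.
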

\begin{proof}
By recurrence relation~\eqref{rec1}, one gets that for any Borel subset of $H$, 
\begin{multline*}
\E\left[\rule{0mm}{4mm}\overline{N}_n(A)\right]-\E\left[\rule{0mm}{4mm}\overline{N}_{n-1}(A)\right]
=p_+\,\P\left[\rule{0mm}{4mm}X_1\in A\right]\\
-(1-p_+)\int \P\left[\rule{0mm}{4mm}\overline{N}_{n-1}(B(x,1))\not=0, t_1(x,\overline{N}_{n-1})\in A\right]\,\mu(dx),
\end{multline*}
where, $\mu$ is the distribution of $X_1$. This identity with
$A=H$ and the monotonicity property give in particular that 
\[
(1-p_+)\int \P\left(N(B(x,1))\not=0\right)\,\mu(dx)\leq p_+.
\]
Since the distribution of $N(B(x,1))$ is, by the above lemma, independent of $x\in H$, one has 
\[
\P\left(N(B(x,1))\not=0\right)\leq \frac{p_+}{1-p_+}<1.
\]
The random variable $N(B(x,1))$ has therefore a positive probability of being $0$ and, in
particular, of being finite. The ergodicity property implies that for all $x\in H$,
$N(B(x,1))<+\infty$ almost surely. The point process $N$ has almost surely a finite mass,
$\P(N\in {\cal M}^*(H))=1$ since there is no accumulation point.  

Let $M$ be a point process satisfying Relation~\eqref{statevol} and $\P(M\in{\cal
  M}(H))=1$.  Equation~\eqref{statevol} gives that 
\[
M(H)\circ\theta-M(H)= \ind{I_1=+}-\ind{I_1=-,N(B(X_1,1))\not=0},
\]
since the right hand side is clearly integrable, the expected value of the left hand is
$0$. See Lemma~12.2 of Robert~\cite{Robert:08} for example. One gets the relation
\[
p_+=(1-p_+)\P(M(B(X_1,1))\not=0),
\]
one obtains the relation
\begin{equation}\label{zero}
\P(M(B(X_1,1))=0)=\frac{1-2p_+}{1-p_+}>0.
\end{equation}
The minimality property of $N$, cf. Theorem~\ref{ex}, gives that almost surely $N\ll M$ so that
\[
\{M(B(X_1,1))=0\}\subset\{N(B(X_1,1))=0\}.
\]
These two subsets having the same  probability by Equation~\eqref{zero}. 
Relation~\eqref{statevol} gives therefore that, almost surely,
\begin{align*}
(M(H){-}N(H))\circ\theta&=M(H){-}N(H)+\ind{I_1={-},N(B(X_1,1))\not=0}{-}\ind{I_1={-},M(B(X_1,1))\not=0}\\
&= M(H){-}N(H), \quad\text{a.s.}
\end{align*}
The non-negative random variable $M(H)-N(H)$ is invariant by $\theta$ and therefore is almost surely a
constant $C$ by the ergodicity property. Since $M(H)<+\infty$ almost surely, there exist
some $m\geq 1$ such that $\P(M(H)=m)>0$ and  some finite subset $\{x_1,\ldots,x_n\}$ of $H$ such
that
\[
H=\bigcup_{\ell=1}^n B(x_\ell,1),
\]
On the event $\{M(H)=m\}$, it is easily checked that if, for $\ell=1,\ldots,n$, a total of
$2m$
``$-$'' points are sent in each ball $B(x_\ell,1)$  and not $+$ occurs, then all the $m$
initial points will removed. More precisely,
\[
\{M(H)=m\} \bigcap_{\ell=1}^n\bigcap_{k=2m(\ell-1)}^{2m\ell-1}\{I_1\circ\theta^k=-, X_1\circ\theta^k\in B(x_\ell,1)\}
\subset\{M(H)\circ\theta^{2mn}=0\}.
\]
Since the variable is ${\cal F}_0$-measurable, it is independent of the sequence of
i.i.d.\  sequence $((I_1,X_1)\circ\theta^i,i\geq 0)$, one gets therefore that 
\[
0<\P(M(H)\circ\theta^{2mn}=0)=\P(M(H)=0)= \P(M(H)=0,N(H)=0),
\]
one deduces that the constant $C=M(H)-N(H)$ is $0$. The two point processes $M$ and $N$
coincide. The theorem is proved. 
\end{proof}

\begin{prop}[Convergence of Distributions of Configurations]\label{propconv}
For the LG, LR, LO policies, if $p_+<1/2$ and $P$ is some finite point process on $H$ and
$(M_n)$ is the sequence of point processes defined by, $M_0=P$ and 
\[
M_{n}=M_{n-1}+\ind{I_{n}=+}\delta_{X_{n}}-\ind{I_n=-,
  M_{n-1}(B(X_{n},1))\not=0}\delta_{t_1(X_{n},M_{n-1}))},\;n\geq 1,
\]
then $(M_n)$ converges in distribution to $N$, the unique solution of Equation~\eqref{statevol}.
\end{prop}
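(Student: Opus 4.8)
The plan is to use the backward-coupling (Loynes-type) representation together with the monotonicity lemma and the uniqueness statement of Theorem~\ref{Uniq}. The key observation is that the forward sequence $(M_n)$ started from a fixed finite $P$ and the stationary backward sequence $(\overline N_n)$ started from the empty configuration can be compared after a suitable shift. Concretely, recall that $\overline N_n\circ\theta^n \stackrel{\text{dist.}}{=} M_n^{(0)}$, the forward chain started from the null measure. The first step is to show that the influence of the initial condition $P$ washes out: since $P$ has finitely many points, say $m=P(H)$, and $H$ is covered by finitely many unit balls $B(x_1,1),\dots,B(x_\ell,1)$, the same ``flushing'' event used at the end of the proof of Theorem~\ref{Uniq} shows that with positive probability all the initial points of $P$ are removed within a bounded number of steps, after which the configuration started from $P$ has dropped below (for the order $\ll$) a configuration that has forgotten $P$. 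Because the flushing event depends only on finitely many i.i.d.\ marks, by ergodicity it occurs infinitely often, so there is an a.s.\ finite (random) coupling time $\tau$ after which the chain started from $P$ is sandwiched between the chain started from $0$ and the chain started from $P$ restricted appropriately.

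More precisely, I would argue as follows. Let $(M_n)$ be started from $P$ and let $(M_n^{(0)})$ be started from $0$, driven by the \emph{same} sequence $(I_n,X_n,U_\bullet)$; by the monotonicity lemma we may couple so that $M_n^{(0)}\ll M_n$ for all $n$. For an upper bound, note that at each $-$-arrival the chain from $P$ can only lose points, and the extra points it carries over $M_n^{(0)}$ number at most $m$; once a flushing block of $-$'s (targeting each $B(x_j,1)$ enough times, with no intervening $+$) occurs, every point of the initial $P$ that still survives is killed, and from that time on $M_n = M_n^{(0)}$ on the event that the surplus was exactly the initial points. Since such a block has positive probability and the marks are i.i.d., by the Borel--Cantelli / ergodic argument such a block occurs a.s., giving an a.s.\ finite time $\tau$ with $M_n=M_n^{(0)}$ for $n\ge\tau$. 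Hence $M_n$ and $M_n^{(0)}$ have the same limiting distribution.

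It remains to identify the limiting distribution of $M_n^{(0)}$. By the lemma preceding Theorem~\ref{ex}, $M_n^{(0)} \stackrel{\text{dist.}}{=} \overline N_n\circ\theta^n \stackrel{\text{dist.}}{=} \overline N_n$, and $\overline N_n \uparrow N$ almost surely for the order $\ll$, where $N$ is the minimal solution of Equation~\eqref{statevol}. By Theorem~\ref{Uniq}, under $p_+<1/2$ and the uniform (Haar) distribution on $H$, this minimal $N$ is the \emph{unique} finite-mass solution, and $\P(N\in{\cal M}(H))=1$, so $S(N)=\emptyset$. Since $\overline N_n\uparrow N$ with $N$ a.s.\ finite, one has $\langle f,\overline N_n\rangle \to \langle f,N\rangle$ a.s.\ for every continuous $f$ on $H$, hence in distribution; combined with the equality in distribution $M_n^{(0)}\stackrel{\text{dist.}}{=}\overline N_n$, this gives $M_n^{(0)}\to N$ in distribution in the sense defined in Section~\ref{secnot}. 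Chaining with the coupling of the previous paragraph yields $M_n\to N$ in distribution.

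The main obstacle I expect is making the ``flushing'' / coupling-time argument clean when $H$ is only a compact metric group rather than a finite set: one must be sure that sending sufficiently many $-$-particles into each of the finitely many covering balls actually removes \emph{every} remaining point of the initial configuration regardless of the geometry of where those points sit, and that this is a positive-probability event measurable with respect to finitely many of the i.i.d.\ marks so that ergodicity applies. For the LG and LO policies this follows because each $-$ in $B(x_j,1)$ kills some point in that ball whenever one is present, so $2m$ of them per ball is more than enough; for LR the same bound works since a uniform choice among present points will, with positive probability, exhaust them. One also must handle the bookkeeping that the surplus $M_n(H)-M_n^{(0)}(H)$ is nonincreasing in $n$ and bounded by $m$, so that once it hits $0$ the two chains agree thereafter — this is where the precise form of the monotone coupling from the Monotonicity Lemma is used.
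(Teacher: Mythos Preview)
Your overall strategy matches the paper's: couple the forward chain $(M_n)$ started from $P$ with the forward chain $(M_n^{(0)})$ started from the null measure, use monotonicity to get $M_n^{(0)}\ll M_n$ with surplus count $M_n(H)-M_n^{(0)}(H)\le m$ nonincreasing, and then show the two chains merge so that the limit of $M_n^{(0)}$, which is $N$ via the backward construction, is also the limit of $M_n$.

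There is, however, a genuine gap in your merging step. A flushing block consisting of a \emph{fixed} number (of order $m$) of ``$-$'' arrivals in each covering ball does not force the surplus to vanish. At the time the block starts, $M_n^{(0)}$ itself may carry arbitrarily many points; each ``$-$'' landing in $B(x_j,1)$ then kills a point in \emph{both} chains (case $M_n^{(0)}(B(X,1))\neq0$), leaving the surplus count unchanged. Moreover, the surplus is not composed of the original points of $P$: under the monotone coupling for LG, when the closest point of $M_n$ is a surplus point, $M_n$ removes it while $M_n^{(0)}$ removes some other point $z$, and $z$ becomes a new surplus point. So ``every point of the initial $P$ that still survives is killed'' neither implies the surplus is zero, nor is itself guaranteed by a block of length $O(m)$ when $M_n(H)$ is large.

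The paper closes this gap with one extra observation you are missing: from the proof of Theorem~\ref{Uniq} one has $\P(N(H)=0)>0$, hence by ergodicity there are almost surely infinitely many indices $\ell$ with $N(H)\circ\theta^\ell=0$; since $\overline{N}_\ell\ll N$, this gives $M_\ell^{(0)}(H)=N_\ell(H)=0$ at such $\ell$, so that $M_\ell(H)\le m$. Only \emph{then} does the flushing block of bounded length (independent of the positions of the $\le m$ remaining points) empty $M_\ell$ with probability bounded below, and a Borel--Cantelli type argument yields an almost sure coupling time. Inserting this ``wait until the lower chain is empty'' step before your flushing argument makes the proof go through.
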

\begin{proof}
Recall that the sequence $(N_n)$  defined by Equation~\eqref{evol} corresponds to the case
where  the initial  state  is empty.   Let  $(\overline{M}_n)$ be  the  sequence of  point
processes satisfying Relation~\eqref{rec1} with $\overline{M}_0=P$. The sequence  $(\overline{N}_n)$
 defined by Relation~\eqref{rec1} is such that  $\overline{N}_0$ is the empty
state.     By   induction,    it   is    easy   to    check   that,    for    $n\geq   0$,
$\overline{M}_n=M_n\circ\theta^{-n}$ and $\overline{N}_n=N_n\circ\theta^{-n}$),
and therefore that $\overline{M}_n$ has the same distribution as $M_n$.

The monotonicity property gives  that  $\overline{N}_n\ll
\overline{M}_n$  holds and that if $m=P(H)$ is the 
number of initial points of $P$ then  necessarily
\[
0\leq \overline{M}_n(H)-\overline{N}_n(H)\leq m.
\]
The limit $N$ of $(\overline{N}_n)$ having a positive probability of being $0$,
there is almost surely an infinite number of $\ell\geq 0$  such that $N(H)\circ\theta^\ell=0$. 
The relation $\overline{N}_n(H)\leq N(H)$ implies  that there an infinite number of
$\ell\geq 0$ such that 
\[
N_\ell(H)=\overline{N}_\ell(H)\circ\theta^\ell=0.
\]
For these indexes $\ell$, $M_\ell(H)\leq m$ and, as in the proof of the above theorem, there is a
positive probability (lower bounded  by a quantity independent of the location of the
points of $M_\ell$) that all the $M_\ell(H)$ points are removed before a new ``$+$'' 
arrives. Hence, with probability $1$, there exists some (random) index $\ell$ such that
$M_n=N_n$. The convergence in distribution of $(M_n)$ is therefore proved. 
\end{proof}
\begin{corollary} 
The distribution of $N$ the solution of Equation~\eqref{statevol} is the only distribution
on ${\cal M}(H)$ invariant by the equation
\[
M_1=M_0+\ind{I_1=+}\delta_{X_1}-\ind{I_1=-}\delta_{t_1(X_1,M_0)}.
\]
\end{corollary}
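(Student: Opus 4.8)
The plan is to deduce the corollary directly from Proposition~\ref{propconv} together with Theorem~\ref{Uniq}, since almost all of the work has already been done. Recall that the equation in the statement is exactly the one-step evolution~\eqref{ev1}, and a distribution $Q$ on ${\cal M}(H)$ is invariant for it precisely when, starting from $M_0$ with law $Q$, the law of $M_1$ is again $Q$. By Theorem~\ref{Uniq}, the minimal stationary random point process $N$ has finite mass almost surely, so its law is a genuine probability distribution on ${\cal M}(H)$; and because $N$ satisfies Relation~\eqref{statevol} almost surely, applying the shift $\theta$ shows that $N$ and $N\circ\theta$ have the same law, which is exactly the statement that the law of $N$ is invariant for the one-step equation. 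So the law of $N$ is \emph{an} invariant distribution; it remains to show it is the \emph{only} one.

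For uniqueness, let $Q$ be any distribution on ${\cal M}(H)$ invariant for the one-step equation, and let $M_0$ be a random point process with law $Q$. Because $Q$ is carried by ${\cal M}(H)$, the configuration $M_0$ is almost surely a \emph{finite} point process. Conditioning on $M_0=P$ for a fixed finite $P$, Proposition~\ref{propconv} tells us that the sequence $(M_n)$ driven by the i.i.d.\ marks $(I_n,X_n)$ converges in distribution to $N$, the unique solution of~\eqref{statevol}. Since this holds for every finite initial configuration $P$ and the limit $N$ does not depend on $P$, integrating over $P\sim Q$ and using dominated convergence for the bounded test functionals $f\mapsto \E[\exp(-\croc{f,M_n})]$ (or any other separating family) yields that $(M_n)$ converges in distribution to $N$ as well. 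On the other hand, by invariance of $Q$ each $M_n$ has law $Q$ for every $n$, so the limiting law must be $Q$. Hence $Q$ equals the law of $N$.

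One small point that deserves care — and which I expect to be the only real obstacle — is the interchange of the limit with the integration over the initial law $Q$. The convergence in Proposition~\ref{propconv} is stated for a fixed finite $P$; to pass to a random initial condition one needs the convergence to be suitably uniform or at least to apply bounded convergence. This is clean because the natural test functionals, e.g.\ $P\mapsto \E_P\!\left[e^{-\croc{f,M_n}}\right]$ for continuous $f\geq 0$, are bounded by $1$ uniformly in $n$ and in $P$, so one can simply integrate the pointwise (in $P$) convergence against $Q(dP)$ and invoke dominated convergence; the measurability of $P\mapsto \E_P[e^{-\croc{f,M_n}}]$ is immediate from the explicit recursion. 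Once this is granted, the two displayed facts — $(M_n)\Rightarrow N$ and $M_n\sim Q$ for all $n$ — force $Q$ to be the law of $N$, completing the proof.

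\begin{proof}
The equation in the statement is the one-step recursion~\eqref{ev1}; a probability
distribution $Q$ on ${\cal M}(H)$ is invariant by it if and only if, whenever $M_0$ has
law $Q$, the point process $M_1$ again has law $Q$.

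First, the law of $N$ is invariant. By Theorem~\ref{Uniq}, $\P(N\in{\cal M}(H))=1$, so the
law of $N$ is indeed carried by ${\cal M}(H)$. Since $N$ satisfies
Relation~\eqref{statevol} almost surely and $\theta$ preserves $\P$, the point processes
$N$ and $N\circ\theta$ have the same distribution; as $N\circ\theta$ is obtained from $N$
by one step of the evolution with the marks $(I_1,X_1)$, this says exactly that the law of
$N$ is invariant.

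Conversely, let $Q$ be any invariant distribution on ${\cal M}(H)$ and let $M_0$ have law
$Q$; almost surely $M_0$ is a finite point process. For a fixed finite $P$, let $(M_n^P)$
be the sequence built from $M_0^P=P$ by the recursion of
Proposition~\ref{propconv} using the i.i.d.\ marks $(I_n,X_n)$. By that proposition,
$(M_n^P)$ converges in distribution to $N$, independently of $P$. Fix a continuous
function $f\geq 0$ on $H$. The map $P\mapsto \E\bigl[\exp(-\croc{f,M_n^P})\bigr]$ is Borel
measurable and bounded by $1$, and for each $P$ it converges to $\E[\exp(-\croc{f,N})]$.
Integrating against $Q(dP)$ and using dominated convergence, the sequence $(M_n)$ started
from $M_0\sim Q$ satisfies
\[
\lim_{n\to+\infty}\E\bigl[\exp(-\croc{f,M_n})\bigr]=\E\bigl[\exp(-\croc{f,N})\bigr].
\]
As $f$ ranges over all continuous non-negative functions, the left-hand sides determine
the law of $M_n$ and the right-hand sides determine the law of $N$. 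But $Q$ is invariant,
so $M_n$ has law $Q$ for every $n$; hence $Q$ equals the law of $N$. Combining with the
first part, the law of $N$ is the unique invariant distribution on ${\cal M}(H)$.
\end{proof}
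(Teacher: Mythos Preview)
Your proof is correct and follows exactly the route the paper intends: the corollary is stated without proof immediately after Proposition~\ref{propconv}, as an immediate consequence of that convergence result together with Theorem~\ref{Uniq}. You have simply filled in the standard details (invariance of the law of $N$ via $\mathcal{F}_0$-measurability and the shift, and the passage from deterministic to random finite initial conditions via dominated convergence on Laplace functionals) that the paper leaves implicit.
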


\subsection*{Uniqueness Result when $H=\T_1(T)$}\ \\
This section is concluded with a uniqueness result for the one-dimensional torus. One
denotes by ${\cal M}^*_\mu(\T_1(T))$ the subset of elements $P$ of ${\cal
M}^*(\T_1(T))$  with a set $S(P)$ of accumulation points negligible for $\mu$, the
Lebesgue measure. The following proposition generalizes the uniqueness result of
Theorem~\ref{Uniq} for the solution of Equation~\eqref{statevol}. 
\begin{prop}[A Uniqueness Property for the Torus]\label{uniqtorus}
For the LG policy, if $M$  is  a random  variable in ${\cal M}^*_\mu(\T_1(T))$,
solution of Equation~\eqref{statevol},  ${\cal F}_0$-measurable  and such that, 
for any accumulation point $a\in S(M)$,
\[
M((a,a+\eps]) =+\infty \text{ and }M([a-\eps,a)) =+\infty
\]
holds almost surely for any $\eps>0$ sufficiently small, then $M$ is the minimal solution
$N$ of Equation~\eqref{statevol}. In particular the set $S(M)$ of accumulation points  is empty
\end{prop}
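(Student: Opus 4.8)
The plan is to show that $M$ coincides with the minimal solution $N$ by exploiting the same cancellation argument used in Theorem~\ref{Uniq}, combined with an analysis of the accumulation set $S(M)$ that uses the one-dimensional order structure of $\T_1(T)$. First I would argue that $S(M)$ must be empty. Suppose not; by ergodicity $S(M)$ is a deterministic nonempty closed subset of $\T_1(T)$, and by the hypothesis on $M$ each accumulation point $a\in S(M)$ has infinitely many points of $M$ immediately to its left and to its right. The key observation on the circle is that if a $-$ particle lands at a point $x$ whose distance to $S(M)$ is less than $1$, then $M(B(x,1))=+\infty$ and the LG policy removes the point of $M$ that is \emph{closest} to $x$; since there are infinitely many points of $M$ accumulating on both sides near the nearest accumulation point $a$, the removed point $t_1(x,M)$ lies at distance $0$ from $S(M)$, hence in $S^*(M)=\{y:d(y,S(M))<1\}$. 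Integrating Relation~\eqref{statevol} over the set $S^*(M)$ (which has positive $\mu$-measure, since $S(M)\neq\emptyset$ and the dynamics force $\mu(B(a,\eps))>0$), exactly as in the proof of Theorem~\ref{theoloc}, yields
\[
p_+\,\P(X_1\in S^*(M))\ge (1-p_+)\,\P\bigl(X_1\in S^*(M),\,M(B(X_1,1))\neq 0,\,t_1(X_1,M)\in S^*(M)\bigr),
\]
and the right-hand probability equals $\P(X_1\in S^*(M))$ because whenever $X_1\in S^*(M)$ one has $M(B(X_1,1))=+\infty$ and the removed point falls in $S^*(M)$. This forces $p_+\ge 1-p_+$, i.e.\ $p_+\ge 1/2$, contradicting $p_+<1/2$. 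Hence $S(M)=\emptyset$ and $M\in{\cal M}(\T_1(T))$ almost surely.

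Once $M$ has finite mass almost surely, the remainder is essentially the argument of Theorem~\ref{Uniq}. By minimality (Theorem~\ref{ex}) we have $N\ll M$ almost surely, so $M(H)-N(H)\ge 0$. Taking $A=H$ in the integrated form of Relation~\eqref{statevol} for both $M$ and $N$ and using that $M(H)$ and $N(H)$ have integrable increments, the expected drifts agree, giving $p_+=(1-p_+)\P(M(B(X_1,1))\neq0)=(1-p_+)\P(N(B(X_1,1))\neq0)$; hence the events $\{M(B(X_1,1))=0\}$ and $\{N(B(X_1,1))=0\}$ have the same probability, and since the first is contained in the second (as $N\ll M$) they coincide up to a null set. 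Substituting this into Relation~\eqref{statevol} for $M(H)-N(H)$ shows the indicator terms cancel almost surely, so $M(H)-N(H)$ is $\theta$-invariant, hence a deterministic constant $C$ by ergodicity.

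It then remains to show $C=0$. Here I would reuse the ${\cal F}_0$-measurability of $M$ (which is a hypothesis) together with the finite-mass property: cover $\T_1(T)$ by finitely many balls $B(x_1,1),\dots,B(x_n,1)$, pick $m$ with $\P(M(H)=m)>0$, and observe that on $\{M(H)=m\}$, sending $2m$ consecutive $-$ particles into each ball (and no $+$) empties the configuration. Since $M$ is ${\cal F}_0$-measurable it is independent of the forward i.i.d.\ sequence $((I_1,X_1)\circ\theta^i,i\ge0)$, so this event has positive probability, forcing $\P(M(H)=0)>0$, hence $\P(M(H)=0,N(H)=0)>0$ and therefore $C=0$. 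Thus $M=N$ almost surely, and in particular $S(M)=S(N)=\emptyset$.

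I expect the main obstacle to be the first step: making fully rigorous the claim that a $-$ arrival near $S(M)$ removes a point lying in $S^*(M)$. One must handle the LG tie-breaking carefully and use precisely the two-sided accumulation hypothesis $M((a,a+\eps])=M([a-\eps,a))=+\infty$ to guarantee that the infimum defining $t_1(X_1,M)$ is attained arbitrarily close to $S(M)$ — this is exactly why the hypothesis on $M$ is stated — together with the extended definition of $t_1$ from Section~\ref{secnot} to cover the degenerate cases where the infimum is achieved only on an accumulation point or on an infinite sphere $\{y:d(X_1,y)=r\}$. The rest is a direct transcription of the arguments already developed for Theorems~\ref{ex}, \ref{theoloc} and \ref{Uniq}.
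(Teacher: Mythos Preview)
Your second step (once $M$ has finite mass, repeat the argument of Theorem~\ref{Uniq}) is fine. The first step, however, has a genuine gap that is not a matter of bookkeeping.

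The inequality you import from Theorem~\ref{theoloc} is obtained there from the monotone Loynes sequence $(\overline{N}_n)$: one uses $\E[\overline{N}_{n+1}(S^*(N))]\ge \E[\overline{N}_{n}(S^*(N))]$, which is available because $\overline{N}_n$ has finite mass and increases to the minimal $N$. For a general ${\cal F}_0$-measurable solution $M$ there is no such monotone approximation. The alternative would be the zero-drift argument used in Theorem~\ref{Uniq}, but that requires $M(S^*(M))<\infty$ so that the increment $M(S^*(M))\circ\theta-M(S^*(M))$ is well defined; here $S(M)\subset S^*(M)$, hence $M(S^*(M))=+\infty$ almost surely and the integrated equation on $S^*(M)$ is meaningless.

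Even setting this aside, the claim that $X_1\in S^*(M)$ forces $t_1(X_1,M)\in S^*(M)$ is false for the LG rule. Take $S(M)=\{a\}$ on a torus of length $>3$, $X_1=a+0.8$, an isolated point of $M$ at $a+1.5$, and no points of $M$ in $(a+0.1,a+1.5)$. The points accumulating in $(a,a+0.1]$ are all at distance $\ge 0.7$ from $X_1$, while the isolated point is at distance $0.7$; nothing prevents $t_1(X_1,M)=a+1.5\notin S^*(M)$. The two-sided accumulation hypothesis does not pull $t_1$ toward $S(M)$; it does the opposite --- it \emph{blocks} $t_1$ from crossing an accumulation point.

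That blocking is exactly what the paper uses. One writes $H\setminus S(M)=\bigcup_n (a_n,b_n)$ and applies the zero-drift identity to the finite-mass sets $[a_n+\eps,b_n-\eps]$. Because both endpoints are two-sided accumulation points, the LG choice is trapped: $t_1(X_1,M)\in(a_n,b_n)$ if and only if $X_1\in(a_n,b_n)$. Summing over $n$, letting $\eps\to 0$ and using $\mu(S(M))=0$ yields $\P(M(B(X_1,1))=0)=(1-2p_+)/(1-p_+)$; the same value holds for $N$, and since $N\ll M$ one gets $\P(M(B(x,1))=0)>0$ for Lebesgue-almost every $x$, contradicting the presence of any deterministic accumulation point. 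So the correct move is to integrate over the complement of $S(M)$, where $M$ has finite mass, rather than over $S^*(M)$.
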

\begin{proof}
Assume that such a variable $M$ exists. It is known that $S(M)$ is a deterministic set and
$H-S(M)$ being an open set, it can be written as
\[
H-S(M)=\bigcup_{n\geq 1} (a_n,b_n),
\]
where $(a_n)$ and $(b_n)$ are sequences of elements of $S(M)$.
Note that, because of the assumption on $M$ near
accumulation points, the variable $t_1(x,M)$ is well defined (i.e.\ not equal to
$\partial$) for all $x\in (a_n,b_n)$, $n\geq 1$, as long as $M([x-1,x+1])\not=0$.  
The minimality of $N$ implies that $N\ll M$. From Equation~\eqref{statevol}, for $n\geq 1$
and $\eps$ sufficiently small,
\begin{multline*}
M([a_n+\eps,b_n-\eps])\circ\theta -M([a_n+\eps,b_n-\eps])\\=
\ind{I_{1}=+, X_{1}\in [a_n+\eps,b_n-\eps]}-\ind{I_1=-, M(B(X_{1},1))\not=0, t_1(X_{1},M)\in[a_n+\eps,b_n-\eps]}.
\end{multline*}
With the same argument as in the previous proof, one gets that the expected value of the
left hand side of the above identity is $0$ and consequently that,
\begin{align*}
\frac{p_+}{1-p_+}&\P(X_{1}\in [a_n+\eps,b_n-\eps]) \\
& = \P(M([X_{1}-1,X_1+1])\not=0, t_1(X_{1},N)\in[a_n+\eps,b_n-\eps])\\
& =  \P(M([X_{1}-1,X_1+1])\not=0, X_1\in(a_n,b_n), t_1(X_{1},M)\in[a_n+\eps,b_n-\eps]),
\end{align*}
due to the assumption on accumulation points.
By letting $\eps$ go to $0$ one gets the relation
\[
\frac{p_+}{1-p_+}\P(X_{1}\in [a_n,b_n))= \P(M([X_{1}-1,X_1+1])\not=0, X_1\in(a_n,b_n)),
\]
by summing up these terms with respect to $n$ and taking into account the fact that
$\mu(S(M))=0$, one finally obtains the identity 
\[
\P(M([X_{1}-1,X_1+1]))=0)=\frac{1-2p_+}{1-p_+},
\]
the same equality also holds for $N$, but since $N\ll M$, this implies that, for almost
every $x\in[0,T]$, 
\begin{multline*}
\P(M([x-1,x+1])=0)\\=\P(N([x-1,x+1])=0)=\P(N([-1,1])=0)=\frac{1-2p_+}{1-p_+}>0.
\end{multline*}
This is in contradiction with the fact that $M$ has accumulation points at some fixed
points. The proposition is proved.
\end{proof}

\section{The case of  the  Interval $[0,T]$}\label{sectorus}
In this  section one considers  a simple space,  the interval $[0,T]$, for  which boundary
effects occur contrary  to Section~\ref{HG} where the homogeneity  property rules out this
feature.  The  LG is analyzed in  this case  and stability  results are
proved. It should be  noted that the LO policy (Local  One-Sided), has a
completely different  qualitative behavior, it  is analyzed in Section~\ref{secmisc}  in a
more general setting. For the LR policy, Theorem~\ref{theoloc} addresses this case. 

The value  of $T$ is assumed  to be greater than  $1$, otherwise the  stability problem is
trivial.  The  location of  the points is  an i.i.d.\  sequence $(X_i)$ of  uniform random
variables   on    $[0,T]$.    The    variable   $N$   is    the   minimal    solution   of
Equation~\eqref{statevol}. As before, the ergodicity  property and Theorem~\ref{ex} give
that the set $S(N)$ of accumulation points of $N$ is deterministic.

Properties of possible accumulation points of $N$ are now analyzed in four steps. The set
$S(N)$ is assumed to be non-empty. 

\smallskip
\noindent (a)  {\em Accumulation points are at distance at least $1$.} \\ 
Assume that there  is at least two elements $a<b$ in $S(N)$ such
that  $b-a<1$ and $(a,b)\subset H{-}S(N)$.  Take  some  $\eps$ sufficiently small,  Equation~\eqref{rec1}  for  the
sequence $(\overline{N}_{n})$ gives the relation
\begin{multline*}
\E\left(\overline{N}_{n+1}([a-\eps,b+\eps])\circ\theta\right) 
-\E\left(\overline{N}_{n}([a-\eps,b+\eps])\right)\\=
p_+(b-a+2\eps)- \P(\overline{N}_{n}(B(X_1,1))\not=0,  t(X_1,\overline{N}_{n})\in[a-\eps,b+\eps]),
\end{multline*}
and by the monotonicity property of $(\overline{N}_{n})$,
\begin{multline*}
\P\left(\overline{N}_{n}(B(X_1,1))\not=0, X_1\in[a+\eps,b-\eps],
t(X_1,\overline{N}_{n})\in[a-\eps,b+\eps]\right)\\ \leq \frac{p_+}{1-p_+} (b-a+2\eps).
\end{multline*}
Since $a$ and $b$ are almost surely accumulation points, the non-decreasing sequence of sets
\[
\left\{\overline{N}_{n}(B(X_1,1))\not=0, X_1\in[a+\eps,b-\eps],  t(X_1,\overline{N}_{n})\in[a-\eps,b+\eps]\right\}
\]
converges, as $n$ goes to infinity, to the set $\{a+\eps\leq X_1\leq b-\eps\}$. By taking
the limit in the last inequality, 
one gets the relation 
\[
b-a-2\eps\leq \frac{p_+}{1-p_+}(b-a+2\eps),
\]
by letting $\eps$ go to $0$, this gives $p_+\geq 1/2$. Contradiction. 
Consequently, if there are  accumulation points for $N$, they are  isolated points of
$[0,T]$ at distance $1$ at least.

\smallskip
\noindent (b) {\em Coupling.} \\
One denotes temporarily by $N_T$ the point process, solution of Equation~\eqref{statevol}, associated to uniform random variables
on $[0,T]$.  Let $S<T$ and $(\overline{N}_{n}^S)$ be the sequence defined by 
\[
\overline{N}_{n+1}^S\circ \theta=\overline{N}_{n}^S+ \ind{I_{1}=+1, X_1\leq S}\delta_{X_{1}}-\ind{I_1=-1,    
\overline{N}_{n}(B(X_{1},1))\not=0, X_1\leq S}\delta_{t_1(X_{1},\overline{N}_{n}^S)}.
\]
then, almost surely, 
\[
\lim_{n\to+\infty} \overline{N}_{n}^S\stackrel{\text{def.}}{=}M_S\stackrel{\text{dist.}}{=}N_S.
\]
By induction, it is easily checked that $\overline{N}_n([0,S]\cap \cdot)\ll 
\overline{N}_n^S$ holds for all $n\geq 1$. The inequality $\ll$ instead of equality comes
from the fact that for $\overline{N}_n([0,S]\cap \cdot)$ a minus arriving in $[S,T]$ can
kill a point in $[0,S]$. By letting $n$ go to infinity, one obtains the relation
\begin{equation}\label{Joblot07}
{N}([0,S]\cap \cdot)\ll M_S.
\end{equation}

\smallskip
\noindent (c)  {\em Patterns of Accumulation Points.} \\
Denote by $S_+(N)$ the subset of elements of $S(N)$ which have an infinite number of
points of $N$ on their right,
\[
S_+(N)=\{x\in [0,T]: \exists \eps_0>0, \forall \eps\leq \eps_0, N((x,x+\eps))=+\infty \text{
  a.s. }\}.
\]
Similarly $S_-(N)$ is defined for left neighborhoods.

\smallskip
\noindent
{\em Claim:} There do not exist $a\in S_+(N)$ and $b\in S_-(N)$ such that $(a,b)\subset [0,T]-S(N)$.
Assume there are such $a$ and $b$.
Equation~\eqref{statevol} for $N$ gives the relation 
\begin{multline*}
N\circ \theta-N\\=
\ind{I_{1}=+,a<X_1<b}\delta_{X_{1}}-\ind{I_1=-,N([X_1-1,X_1+1])\not=0,a<t_1(X_{1},N)<b }\delta_{t_1(X_{1},N)},
\end{multline*}
valid in the space ${\cal M}([a,b])$. Because of the assumption on $a$ and $b$ and of the
definition of $t_1(\cdot,N)$, one has the 
identity
\[
\{N([X_1-1,X_1+1])\not=0,a{<}t_1(X_{1},N){<}b \}{=}\{N([X_1-1,X_1+1])\not=0,a{<}X_{1}{<}b \}
\]
which gives the relation
\begin{multline}\label{eqaux}
N\circ \theta-N\\=
\ind{I_{1}=+,a<X_1<b}\delta_{X_{1}}-\ind{I_1=-,N([X_1-1,X_1+1])\not=0,a<X_{1}<b }\delta_{t_1(X_{1},N)}.
\end{multline}
If $a$ and $b$ are identified, the above equality states that the point process $N$
restricted to the torus $\T_1(b-a)=[a,b]$ satisfies Relation~\eqref{statevol} for this
state space. Proposition~\ref{uniqtorus} shows that $N$ on $[a,b]$ is a point process with
finite mass. Contradiction.

\smallskip
\noindent (d)  {\em Conclusion.} \\ 
Recall that $N$ is the limit of the $({\overline N}_n)$ when the sequence $(X_i)$ is
 i.i.d.\ uniformly distributed on $[0,T]$. Since $(X_i)$ has the same distribution as
 $(T-X_i)$,  one deduces that the distribution of $N$ is invariant with respect to the
 mapping $x\mapsto T-x$.  Accumulation points of $N$ being at distance $1$ at least by
 (a), $S(N)$ is a finite set, $S(N)=\{a_1,\ldots, a_p\}$, for some $p_+\geq 1$ and $0\leq
 a_1<a_2<\cdots<a_p\leq T$.  

Assume that  $a_1\in S_+(N)$ holds. 
By symmetry of $N$ with respect to the mapping $x\to {T}-x$, one gets that $a_p=T-a_1\in S_-(N)$.
By (c) one has necessarily $a_2\in S_+(N)$, and therefore $a_{p-1}\in S_-(N)$. By
proceeding inductively, one deduces that there exists a $k<p$ such that $a_k\in S_+(N)$
and $a_{k+1}\in S_-(N)$. This is impossible according to (c). 

Consequently, $a_1\in S_-(N)$ and $a_1>0$.
By the coupling result~\eqref{Joblot07} above, with the same notations as in (b), one has 
\[
N_T([0,a_1]\cap \cdot)\ll N_{a_1}.
\]
In particular $a_1$ is an accumulation point of $N_{a_1}$, by symmetry of $N_{a_1}$ with respect to
the mapping $x\to {a_1}-x$, one gets that $0$ is also an accumulation point of
$N_{a_1}$. Consequently, $S(N_{a_1})=\{b_1,\ldots,b_q\}$, with $b_1\in S_+(N_{a_1})$ which
is impossible by what have just been proved. The set $(N)$ is therefore
empty.

The uniqueness statement of the following proposition has therefore been proved. 
\begin{prop}[Stability property for  the LG policy]\label{94}\ \\
If $p_+<1/2$ and the random variable $X_1$ is uniformly distributed on $[0,T]$, 
then Equation~\eqref{statevol}  has a unique  minimal solution $N$ such that $\P(N\in{\cal
  M}(H))=1$. 

If $N_0$ is an element of ${\cal M}([0,T])$ with finite mass, then the sequence $(N_n)$
defined by Recursion~\eqref{evol} converges in distribution to $N$. 
\end{prop}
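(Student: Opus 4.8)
The plan is to prove the two assertions of Proposition~\ref{94} in turn. The \emph{existence of a unique minimal solution $N$ with finite mass} is essentially contained in the four-step discussion (a)--(d) preceding the statement: step~(a) shows accumulation points of $N$ are isolated and at distance at least $1$, so $S(N)$ is a finite set $\{a_1,\dots,a_p\}$; the symmetry of $N$ under $x\mapsto T-x$ (from $(X_i)\stackrel{\text{dist.}}{=}(T-X_i)$) together with the pattern restriction of step~(c) and the coupling inequality~\eqref{Joblot07} of step~(b) force $S(N)=\emptyset$ by the inductive argument already carried out in step~(d). Thus $\P(N\in{\cal M}([0,T]))=1$, and uniqueness of the \emph{minimal} solution is immediate from Theorem~\ref{ex}, which applies since $H=[0,T]$ is a compact metric space and $p_+<1/2$. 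So the first paragraph of the proof just records that steps (a)--(d) have established this.

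For the \emph{convergence statement}, I would follow the template of Proposition~\ref{propconv}. Let $(\overline{N}_n)$ be the backward sequence of Equation~\eqref{rec1} with $\overline{N}_0\equiv 0$, and let $(\overline{M}_n)$ be the analogous backward sequence with $\overline{M}_0=N_0$; as in the proof of Proposition~\ref{propconv}, one checks by induction that $\overline{M}_n=M_n\circ\theta^{-n}$ and $\overline{N}_n=N_n\circ\theta^{-n}$, so $\overline{M}_n\stackrel{\text{dist.}}{=}N_n$. By the Monotonicity Lemma, $\overline{N}_n\ll\overline{M}_n$ for all $n$, and since $N_0$ has $m:=N_0([0,T])<\infty$ points one has the uniform bound $0\le \overline{M}_n([0,T])-\overline{N}_n([0,T])\le m$. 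Because $N=\lim_n\overline{N}_n$ has finite mass and, by Equation~\eqref{zero} applied with $M=N$, $\P(N([0,T])$ can be annihilated by a finite burst of ``$-$'' arrivals$)>0$, there is almost surely an infinite sequence of indices $\ell$ with $N([0,T])\circ\theta^\ell=0$, hence (using $\overline{N}_\ell([0,T])\le N([0,T])$ and $N_\ell=\overline{N}_\ell\circ\theta^\ell$) an infinite set of $\ell$ with $N_\ell([0,T])=0$. At each such $\ell$ one has $M_\ell([0,T])\le m$, and a finite deterministic pattern of ``$-$'' arrivals — sending $2m$ negative points into each ball of a fixed finite cover of $[0,T]$ by balls $B(x_k,1)$, with no ``$+$'' in between — removes all remaining points of $M_\ell$; this pattern has probability bounded below by a positive constant \emph{independent of the positions of those points}, and the event ``$N_\ell([0,T])=0$'' is ${\cal F}_0$-measurable hence independent of the future i.i.d.\ increments. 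A Borel--Cantelli argument along the infinite subsequence then yields a finite random index after which $M_n=N_n$ for all $n$, giving convergence in distribution of $(M_n)$ to $N$.

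The main obstacle is not really in the convergence argument — that is a near-verbatim transcription of Proposition~\ref{propconv}, which holds for homogeneous $H$ but whose proof uses only a finite cover of $H$ by unit balls and the positive annihilation probability, both available on $[0,T]$. The genuine content, and the real work, is the exclusion of accumulation points, i.e. steps (a)--(d); in particular step~(c), which reduces a hypothetical interval $(a,b)$ with $a\in S_+(N)$, $b\in S_-(N)$ to the torus case and invokes Proposition~\ref{uniqtorus}, is the delicate point, since it is exactly there that the boundary geometry of $[0,T]$ (as opposed to the group structure of Section~\ref{HG}) must be handled via the symmetry $x\mapsto T-x$ and the coupling~\eqref{Joblot07} to the smaller intervals $[0,a_1]$. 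Once $S(N)=\emptyset$ is in hand, the rest is routine; so in writing the proof I would state the existence/uniqueness half as an immediate consequence of (a)--(d) and Theorem~\ref{ex}, and spend the body of the proof on the convergence half as sketched above.
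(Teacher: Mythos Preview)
Your proposal is correct and follows exactly the paper's approach: the paper's own proof is one sentence deferring to the argument of Proposition~\ref{propconv}, the existence and finite-mass claim having been established in steps (a)--(d). One small slip to fix: the event $\{N_\ell([0,T])=0\}$ is \emph{not} ${\cal F}_0$-measurable (that $\sigma$-field is generated by the increments with non-positive index), it is measurable with respect to $\sigma((I_k,X_k):1\le k\le \ell)$; the independence from the future i.i.d.\ increments $(I_{\ell+1},X_{\ell+1}),\ldots$ that you need still holds, so the Borel--Cantelli step goes through unchanged.
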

\begin{proof}
The proof of the convergence in distribution follows the same lines as in the proof of
Proposition~\ref{propconv}.
\end{proof}

\noindent
The distribution of the variable $X_1$ is now assumed to have a
density $h$ with respect to Lebesgue's Measure. 
\begin{prop}[Non-Uniform Distributions]
If $p_+<1/2$ and the distribution of $X_1$ has density $h$ on $[0,1]$ which is  piecewise
constant on a finite partition of sub-intervals of $[0,T]$, then the conclusions of
Proposition~\ref{94} also hold in this case.  
\end{prop}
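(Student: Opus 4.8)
The plan is to reduce the piecewise-constant density case to the uniform case already treated in Proposition~\ref{94}, by exploiting the fact that the arguments of Section~\ref{sectorus} only used the uniform distribution through two mechanisms: first, the positivity of $\P(X_1\in B(a,\eps))$ for every $a$ and every $\eps>0$ (used to derive the contradictions in steps (a), (c), (d)); and second, the symmetry of $N$ under $x\mapsto T-x$, which came from the symmetry of the law of $X_1$. A piecewise-constant density $h$ which is bounded away from $0$ on all of $[0,T]$ still satisfies the positivity property, so the only genuinely new issue is the loss of the $x\mapsto T-x$ symmetry. First I would re-examine steps (a), (b), and (c) of Section~\ref{sectorus} and check that none of them used symmetry: step (a) (accumulation points at distance $\geq 1$) only used $p_+<1/2$ and positivity of the density on neighborhoods; step (c) (no $a\in S_+(N)$ to the left of $b\in S_-(N)$ with an empty interval between) used Proposition~\ref{uniqtorus}, whose proof in turn used only the analogue of Equation~\eqref{zero}, i.e.\ an integrability argument plus $p_+<1/2$ — none of this is sensitive to the precise density, only to its positivity. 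So steps (a) and (c) go through verbatim once $h$ is positive on $[0,T]$.

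The remaining work is to replace the symmetry argument of step (d). Here I would argue directly: suppose $S(N)$ is non-empty and finite, $S(N)=\{a_1<\dots<a_p\}$, with accumulation points isolated and at mutual distance $\geq 1$ by step (a). By step (c) one cannot have $a_i\in S_+(N)$ and $a_{i+1}\in S_-(N)$ for consecutive accumulation points with nothing between them. Consider the smallest accumulation point $a_1$. If $a_1\in S_+(N)$, then running along $a_1<a_2<\dots$ and applying step (c) repeatedly forces every $a_i$ to be in $S_+(N)$; in particular the largest, $a_p$, lies in $S_+(N)$ but not $S_-(N)$. But then near $a_p$ there is a whole interval $(a_p, a_p+\eps)$ (or the stretch up to $T$) carrying infinitely many points of $N$ with no accumulation point to their right — and the restriction of $N$ to $[a_p,T]$ (or to the relevant clopen piece) satisfies Equation~\eqref{statevol} for the interval $[a_p,T]$, whose analysis by Proposition~\ref{94}/Theorem~\ref{ex} on that sub-interval (with the conditioned, still piecewise-constant, density) gives a finite-mass minimal solution, contradicting $a_p\in S(N)$. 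Symmetrically, if $a_1\in S_-(N)$, then $a_1>0$, and the coupling inequality~\eqref{Joblot07} (whose proof is insensitive to the density) gives $N([0,a_1]\cap\cdot)\ll N_{a_1}$, where $N_{a_1}$ is the minimal solution on $[0,a_1]$ with the conditioned density; so $a_1\in S(N_{a_1})$, and since $a_1$ is the \emph{right} endpoint of $[0,a_1]$, running the same dichotomy on $N_{a_1}$ produces a left endpoint accumulation point in $S_+$, which is the case just excluded. Hence $S(N)=\emptyset$.

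To make this recursion terminate cleanly rather than merely ``descend'', I would phrase it as an induction on the number of subintervals in the partition on which $h$ is constant, or more robustly, on the length of the interval measured in units of $1$: each step of the recursion passes to a strictly shorter sub-interval $[0,a_1]$ with $a_1\geq 1$ removed from the right end, or to $[a_p,T]$ similarly, and the density restricted to the sub-interval is again piecewise constant with finitely many pieces and bounded below; since $T$ is finite the recursion bottoms out at an interval of length $<1$, where the stability problem is trivial (no ``$-$'' can kill across the whole space, and the number of points is a reflected random walk with negative drift as in the GG discussion of the introduction). The main obstacle is precisely this bookkeeping: ensuring that when one conditions the law of $X_1$ to a sub-interval the hypotheses of the earlier propositions (piecewise-constant density, $p_+<1/2$, positivity) are preserved, and that Proposition~\ref{uniqtorus} — stated for $\T_1(T)$ with the Lebesgue measure — has a valid analogue for a torus carrying a piecewise-constant density; re-reading its proof, it uses only the integrability of $M(H)\circ\theta-M(H)$ and the strict inequality in the analogue of~\eqref{zero}, so the generalization is routine but must be stated. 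Once these adaptations are in place, the convergence-in-distribution half follows exactly as in Proposition~\ref{propconv} and Proposition~\ref{94}, since that argument used the density only through the positive lower bound on the probability of emptying a ball, which persists here.
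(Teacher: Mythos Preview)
Your approach is genuinely different from the paper's, and considerably more involved. The paper does not re-run the four-step analysis of Section~\ref{sectorus}. Instead it uses a short domination argument: it introduces a modified sequence $(\widetilde{N}_n)$ in which a minus landing in a sub-interval $I_k$ of the partition is \emph{forbidden} from killing a point in any other $I_j$. Since this only suppresses some kills, monotonicity gives $\overline{N}_n\ll\widetilde{N}_n$. On each $I_k$ the modified process evolves independently and, because $h$ is constant there, is (up to translation and a time change) exactly the process of Proposition~\ref{94} with uniform arrivals on an interval of length $|I_k|$. Proposition~\ref{94} then bounds each piece by a finite-mass stationary process, and summing over $k$ bounds $N$. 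This handles zero-density pieces automatically (those $I_k$ stay empty), and no symmetry or recursion is needed.

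Your route has a real gap in the termination of the recursion. You claim that each step ``passes to a strictly shorter sub-interval $[0,a_1]$ \ldots\ or to $[a_p,T]$'' and that the shortening is by at least one unit, so the recursion bottoms out. Neither claim is justified. In the case $a_1\in S_+(N)$ with $p=1$ and $a_1=0$, the sub-interval $[a_p,T]$ is $[0,T]$ itself and nothing shrinks. In the case $a_1\in S_-(N)$, you pass to $N_{a_1}$ on $[0,a_1]$ and observe that $a_1\in S(N_{a_1})$; but $a_1$ may well be the \emph{only} accumulation point of $N_{a_1}$, in which case the ``same dichotomy'' on $N_{a_1}$ again lands in the $S_-$ branch at the same right endpoint, and you recurse to the same interval. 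The paper's step~(d) avoids exactly this trap by invoking the symmetry $x\mapsto a_1-x$ of $N_{a_1}$ (valid because the restricted density there is uniform) to produce a \emph{left}-endpoint accumulation point in $S_+$; you have discarded that symmetry and not replaced it with anything that forces strict descent. Neither of your proposed inductive parameters --- the number of partition pieces or $\lfloor T\rfloor$ --- decreases in general under the passage $[0,T]\to[0,a_1]$ or $[0,T]\to[a_p,T]$. You also tacitly assume $h$ is bounded away from zero, which is not in the hypothesis; the paper's decoupling argument does not need this.
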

\begin{proof}
The proof is sketched since most of the arguments have been already used at several occasions.
By assumption there is a partition of $[0,T]$ by sub-intervals $(I_k,k\in K)$ and
$(\alpha_k,k\in K)$ such that, for $x\in[0,T]$,
\[
h(x)=\sum_{k\in K} \alpha_i {\mathbbm 1}_{I_k}(x).
\]
The sequence $(\overline{N}_n)$ defined by Recurrence~\eqref{rec1} can be dominated by the
sequence of point process $(\widetilde{N}_n)$ whose dynamic is modified as follows: a minus point
falling into some sub-interval $I_k$ n does not kill a point in another sub-interval.  In this
way, for $n\geq 1$, one has clearly $\overline{N}_n\ll \widetilde{N}_n$. Now, for $k\in
K$, the point process $\widetilde{N}_n$ restricted to $I_k$ is, up to a translation, simply  the point process
associated to uniformly distributed random variables on $I_k$ when 
\[
\ind{X_1\in I_k}+\ind{X_2\in I_k}+\cdots+\ind{X_n\in I_k}
\]
points have been used. By Proposition~\ref{94}, the point processes $\widetilde{N}_n$,
$n\geq 1$ are upper bounded by a point process with finite mass. This shows that 
$N$, the limit of $(\overline{N}_n)$, has almost surely a finite mass. 
\end{proof}

\section{One-Sided Policies}\label{secmisc}
In this section, a multi-dimensional generalization of the results of
Robert~\cite{Robert:03} is presented. It is assumed that 
$T=(T_i)\in\R_+^d$ with $T_i>0$ for $1\leq i\leq d$,  $H$ is  defined as
\[
H=\prod_{i=1}^d [0,T_i],
\]
and that the locations of the points $(X_i)$ are uniformly distributed in $H$. 

With a slight abuse  of notation, one will denote $H=[0,T]^d$ and if $x$, $y\in\R_+$, $xy$
[resp. $x/y$] will stand for $(x_iy_i)$ [resp. $(x_i/y_i)$]. Similarly, if $x=(x_i)\in\R_+^d$,
$\log x$ denotes $(\log x_i)$ and  finally $\Delta$ is the
subset defined as the lower boundary of $H$, 
\[
\Delta\stackrel{\text{def.}}{=}\{x\in[0,T]^d:\exists i\in\{1,\ldots,d\}, x_i=0\}.
\]
A ``$-$'' particle at $x$ can only kill the closest particle of the point process in 
the orthant with the corner at $x$, i.e.\ in the set $(x+\R_+^d)\cap H$. 
In order to get a more precise characterization of the variable $N$ of Theorem~\ref{ex}, the
following notation has to be introduced. If $M\in{\cal M}^*(H)$, one denotes by $D(M)$ the
``dead zone'' of $M$ for minus particles, i.e. the set of locations where no point of $M$ can be
killed by them,
\[
D(M)=\{y\in H: M\left((y+\R_+^d)\cap H\right)=0\}.
\]
If $M$ is the null measure, then $D(M)=H$ and if $P$, $Q\in{\cal M}^*(H)$ are such that
$P\ll Q$, then $D(Q)\subset D(P)$. 

In this context, the corresponding stationary evolution equation is given by
\begin{equation}\label{statevolO}
N\circ\theta=N+ \ind{I_{1}=+}\delta_{X_{1}}-\ind{I_1=-,X_1\not\in D(N) }\delta_{t_1(X_{1},N)}.
\end{equation}
With the same arguments as in  Theorem~\ref{ex} for local policies, there exists a unique minimal $N$ in the
set ${\cal M}^*(H)$ with probability $1$ which is solution of
Equation~\eqref{statevolO}. The variable $N$ is the limit of the non-decreasing sequence
$(\overline{N}_n)$ defined by the recurrence 
\begin{equation}\label{rec2}
\overline{N}_{n+1}\circ \theta=\overline{N}_{n}+ \ind{I_{1}=+1}\delta_{X_{1}}-\ind{I_1=-1,    
X_1\not\in D(\overline{N}_{n})}\delta_{t_1(X_{1},\overline{N}_{n})}.
\end{equation}
The following proposition establishes a specific property of this policy, namely that it
exhibits an invariance with respect to scaling. 
\begin{prop} If $p_+<1/2$ and $\alpha=(\alpha_i)\in\R_+^d$ with $0<\alpha_i\leq 1$ for $1\leq
i\leq d$, for the GO policy the minimal solution $N$ of
Equation~\eqref{statevolO} satisfies the invariance relation
\begin{equation}\label{scale}
\int_{[0,\alpha T]^d}f(x)\,N(dx)\stackrel{\text{dist.}}{=} \int_{[0,T]^d} f(\alpha x)\,N(dx)
\end{equation}
for any continuous function on $[0,T]^d$. Furthermore $N$ is almost surely a Radon measure on
$[0,T]^d-\Delta$.% and it is the unique solution of Equation~\eqref{statevolO} with this property.
\end{prop}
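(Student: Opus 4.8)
\medskip
\noindent\textbf{Strategy.}  The plan is to obtain the scaling identity~\eqref{scale} from a backward-coupling argument built on a conjugacy of $H$ with the smaller box, and then to deduce the location of the accumulation points of $N$ from~\eqref{scale} together with the mass-balance identity carried by~\eqref{statevolO}. For $\alpha=(\alpha_i)$ with $0<\alpha_i\leq 1$ let $\Phi_\alpha\colon H\to[0,\alpha T]^d$ be the bijection $\Phi_\alpha(x)=\alpha x$ (coordinatewise product). It pushes the uniform law on $H$ onto the uniform law on $[0,\alpha T]^d$, it respects the coordinatewise order and the orthants ($z\geq x\iff\alpha z\geq\alpha x$ and $\Phi_\alpha((x+\R_+^d)\cap H)=(\alpha x+\R_+^d)\cap[0,\alpha T]^d$), and hence it carries dead zones to dead zones, $x\in D(M)\iff\alpha x\in D(\Phi_\alpha M)$, reading $\Phi_\alpha M$ as a measure on $[0,\alpha T]^d$. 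A change of variables rewrites the right-hand side of~\eqref{scale} as $\croc{f,\Phi_\alpha N}$, so~\eqref{scale} is exactly the assertion that the restriction $N(\,\cdot\cap[0,\alpha T]^d)$ has the same law as the pushforward $\Phi_\alpha N$. For isotropic $\alpha$ the map $\Phi_\alpha$ conjugates the GO dynamics on $H$ with the GO dynamics on $[0,\alpha T]^d$ outright; for general $\alpha$ it still conjugates the dead-zone part of the dynamics, only the choice of which point of a common orthant is killed failing to transform cleanly (the Euclidean nearest-point rule is not scale invariant). In either case write $N^\alpha$ for the minimal solution of~\eqref{statevolO} on $[0,\alpha T]^d$, which exists by the one-sided analogue of Theorem~\ref{ex}.

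\medskip
\noindent\textbf{Proof of~\eqref{scale}.}  Let $(\overline N_n)$ be the non-decreasing sequence~\eqref{rec2} with limit $N$, and $(\overline N^\alpha_n)$ the analogous sequence for the GO model on $[0,\alpha T]^d$ driven by the sub-stream of marks $(I_k,X_k)$ with $X_k\in[0,\alpha T]^d$, with limit $N^\alpha$. First I would observe that a mark falling outside $[0,\alpha T]^d$ never changes the trace $\overline N_n(\,\cdot\cap[0,\alpha T]^d)$: a ``$+$'' outside adds a Dirac mass outside, and a ``$-$'' at $x\notin[0,\alpha T]^d$ can only kill a point $z\geq x$, which then has a coordinate $>\alpha_iT_i$ and so lies outside as well. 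An induction over the marks inside the box then gives the pathwise domination $\overline N_n(\,\cdot\cap[0,\alpha T]^d)\gg\overline N^\alpha_n$, the two agreeing except when a ``$-$'' arrives at some $x\in[0,\alpha T]^d$ whose nearest orthant point in $H$ lies outside $[0,\alpha T]^d$ while the orthant still meets the trace: there the full chain kills the outside point while the box chain kills an inside one. Letting $n\to\infty$ yields one half of~\eqref{scale}, namely $N(\,\cdot\cap[0,\alpha T]^d)\gg\Phi_\alpha N$ in law. \emph{Upgrading this stochastic domination to an equality in law is the step I expect to be the crux.} It amounts to showing that the deaths diverted to $H\setminus[0,\alpha T]^d$ — and, for anisotropic $\alpha$, the mismatch coming from the non-scale-invariant nearest-point selection — do not alter the law of the trace. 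I would attack this by re-coupling the two chains at each diverted death, exploiting the exchangeability of the points sitting in a common orthant together with the minimality of $N$ and $N^\alpha$, and by feeding in~\eqref{scale} inductively on the larger boxes $[0,\alpha'T]^d\supset[0,\alpha T]^d$ so that the part of $N$ outside $[0,\alpha T]^d$ is itself a rescaled copy of the process; matching the two intensity measures then pins the already comparable laws together. (For isotropic $\alpha$ only the first difficulty remains.)

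\medskip
\noindent\textbf{Location of the accumulation points.}  By ergodicity $S(N)$ is a deterministic closed subset of $[0,T]^d$. Writing the mass-balance identity of~\eqref{statevolO} for the finite measures $\overline N_n$, $\E[\overline N_{n+1}(H)]-\E[\overline N_n(H)]=p_+-(1-p_+)\P(X_1\notin D(\overline N_n))$, and using that the left-hand side is non-negative while $\P(X_1\notin D(\overline N_n))$ is non-decreasing, one gets $\P(X_1\notin D(\overline N_n))\leq p_+/(1-p_+)$ for all $n$, hence in the limit $\P(X_1\notin D(N))\leq p_+/(1-p_+)<1$; in particular $\P(X_1\in D(N))>0$. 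Suppose now $S(N)$ contained a point $a$ with $a_i>0$ for every $i$. Comparing accumulation sets in~\eqref{scale}, which are deterministic, gives $S(N(\,\cdot\cap[0,\alpha T]^d))=\alpha S(N)$ within $[0,\alpha T]^d$ for every admissible $\alpha$; choosing $\alpha$ with $a\in[0,\alpha T]^d$ (so $\alpha_i=1$ on the coordinates where $a_i=T_i$), this reads $a/\alpha\in S(N)$. Letting each remaining $\alpha_i$ run over $(a_i/T_i,1]$ and using that $S(N)$ is closed, one obtains by iteration $\prod_i[a_i,T_i]\subset S(N)$. But then for $\mu$-almost every $x$, namely every $x$ with $x_i<T_i$ for all $i$, there is $z\in\prod_i[a_i,T_i]$ with $z_i>x_i$ for all $i$ (take $z_i=a_i$ when $x_i<a_i$ and $z_i\in(x_i,T_i]$ otherwise), so that the orthant $(x+\R_+^d)\cap H$ contains a whole neighbourhood of the accumulation point $z$; hence $N((x+\R_+^d)\cap H)=+\infty$ and $x\notin D(N)$. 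This forces $\P(X_1\notin D(N))=1$, contradicting the bound just obtained. Therefore every accumulation point of $N$ has a vanishing coordinate, i.e.\ $S(N)\subset\Delta$ — which is exactly the statement that $N$ is almost surely a Radon measure on $[0,T]^d-\Delta$.
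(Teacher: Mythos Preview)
Your treatment of the second claim, $S(N)\subset\Delta$, is correct and is essentially the paper's own argument: derive $\P(X_1\in D(N))\geq 1-p_+/(1-p_+)>0$ from the mass balance for $(\overline N_n)$, then use~\eqref{scale} to transport any accumulation point $a$ with all coordinates positive toward the corner $T$, forcing $D(N)=\emptyset$ a.s.\ --- a contradiction. The paper applies a single scaling $\alpha_i=(a_i+\eps)/T_i$ to send $a$ to $T$; you iterate to fill $\prod_i[a_i,T_i]\subset S(N)$. Either route works.

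The gap is in your derivation of~\eqref{scale}, and you flag it yourself. The paper's proof here is extremely terse: it simply asserts that the restriction of $\overline N_{\nu_n}$ to the sub-box equals $\Phi_\alpha\overline N_n$ in law by ``invariance by scaling of the dynamics'', without discussing either of your obstacles --- so you are not missing an argument the paper supplies. For $d=1$ both obstacles evaporate (``nearest $z\geq x$'' is ``smallest $z\geq x$'', which is scale-invariant, and a diverted death outside $[0,\alpha T]$ can only occur when the trace has no orthant point left), and the paper's line is complete there. For $d\geq 2$ your concerns are genuine: a minus at $x=(0,0)$ with configuration $\{(0.8,0.01),(0.6,0.6)\}$ and box $[0,0.7]\times[0,1]$ shows the trace and the box process diverge pathwise, and anisotropic $\Phi_\alpha$ does not commute with the Euclidean $t_1$. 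Your one-sided domination $\overline N_n^\alpha\ll\overline N_n(\,\cdot\cap[0,\alpha T]^d)$ is correct, but the ``upgrading'' to equality is only sketched. The exchangeability you invoke is not available --- the surviving points of $\overline N_n$ are deterministic functionals of the \emph{ordered} mark sequence --- and when you write ``$N(\,\cdot\cap[0,\alpha T]^d)\gg\Phi_\alpha N$ in law'' you have already silently identified $N^\alpha$ with $\Phi_\alpha N$, which is precisely obstacle~(ii). As it stands, this half of the proof is incomplete; the paper is terser at the same spot rather than more complete.
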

\begin{proof}
Relation~\eqref{scale} is a consequence of the two following simple facts:
\begin{itemize}
\item  The variables $(X_i)$ that are in $[0,\alpha T]$ have the same distribution as
  $(\alpha X_i)$.
\item  Invariance by scaling of the dynamics:
\[
\int_{[0,\alpha T]^d} f(x)\overline{N}_{\nu_n}(dx) \stackrel{\text{dist.}}{=}  
\int_{[0,T]^d}f(\alpha x)\overline{N}_{n}(dx),
\]
where $\nu_n$ is the first index $k$ for which exactly $n$ elements of the $k$ first
points are in $[0,\alpha   T]^d$. 
Relation~\eqref{scale} follows from this identity by letting $n$ go to infinity.
\end{itemize}

Equation~\eqref{rec2} gives the inequality
\[
0\leq \E\left(\overline{N}_n(H)\right)-\E\left(\overline{N}_{n-1}(H)\right)
=p-(1-p)\P(X_1\not\in D(\overline{N}_{n-1})).
\]
By letting $n$ go to infinity and by using the fact that the non-increasing sequence of
sets $(D(\overline{N}_{n-1}))$ is converging to $D(N)$, one gets therefore that
\[
\P(X_1\not\in D(N))\leq \frac{p_+}{1-p_+}<1.
\]
The set $D(N)$ is therefore non-empty with some positive probability.

With the ergodicity property, any accumulation point $a=(a_i)\in[0,T]^d$ of $N$ is
deterministic.  Assume that $a_i>0$ for all $1\leq i\leq d$. 
One considers the case where all the $a_i$ are such that $a_i<T_i$, the others situations
are treated in a similar way by using one-sided neighborhoods of $a$.
Take  $\eps_0>0$ sufficiently  small so that, if $\eps<\eps_0$, then
$a+\eps\stackrel{\text{def.}}{=}(a_i+\eps)$ and $a-\eps\in[0,T]^d$.  One denotes by
$(\alpha_i)=(T/(a_i+\eps))$, then with probability $1$,  $N([a-\eps,a+\eps]^d)=+\infty$
for all $0<\eps<\eps_0$. Relation~\eqref{scale} implies therefore that $T$ is also an
accumulation point. This contradicts that the fact that the set $D(N)$ is therefore
non-empty with some positive probability. One concludes that if there exists an
accumulation point of $N$, then  necessarily one of its coordinates is null and therefore
it belongs to $\Delta$. This shows $\P(N\in{\cal M}([0,T]^d-\Delta))=1$.

\begin{comment}
If $M$ is a solution of Equation~\eqref{statevolO} such that $\P(M\in{\cal
  M}([0,T]^d-\Delta))=1$, with similar arguments as before, one gets
\[
\P(X_1\not\in D(M))= \P(X_1\not\in D(N))={p_+}/{(1-p_+)}.
\]
By minimality of $N$, $N\ll M$ and therefore  $D(M)\subset D(N)$, the above identity shows
that $D(M)=D(N)$ holds almost surely. By definition of $D(M)$ [resp. $D(N)$],  there exist a finite
number of points  of $M$ [resp. $N$], $(u^1,\ldots,u^p)$ [resp. $(v^1,\ldots,v^q)$] such that
\[
H-D(N)=\bigcap_{i=1}^q [0,v^i]^T, \text{ and } H-D(M)=\bigcap_{i=1}^p [0,u^i]^T
\]
and the $u^i$'s [resp. $v^i$'s] are on the boundary of $H-D(M)$ [resp. $H-D(N)$]. The
identity $D(M)=D(N)$ implies that 
\end{comment}
\end{proof}
The following proposition shows that for the invariant distribution,
configurations under the GO policy have an infinite number of points with probability
$1$. It will be shown that this property also holds for the local version of the policy.
\begin{prop}[Infinite number of points near $\Delta$]
Almost surely, any point of the set $\Delta$ is an accumulation point of the solution $N$
of Equation~\eqref{statevolO} for the GO policy. Furthermore, the point process
$\widetilde{N}$ on $\R_+^d$ defined by 
\[
\widetilde{N}=\int_{[0,T]^d} \delta_{-\log(u/T)}\,N(du).
\]
is a stationary point process on $\R^d_+$, i.e.\ for $x\in\R_+^d$, the distribution of the
variable  $\widetilde{N}$ is invariant with respect to the translation to $x$:
\[
\int_{\R^d} f(x+y)\,\widetilde{N}(dy)\stackrel{\text{dist.}}{=}\int_{\R^d} f(y)\,\widetilde{N}(dy),
\]
for any continuous function $f$ with compact support on $\R_+^d$.
\end{prop}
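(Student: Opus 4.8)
The plan is to draw everything from the scaling relation~\eqref{scale} of the previous proposition, much as the homogeneity of $H$ was exploited in Section~\ref{HG}.

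I would first establish two global facts: that $N$ has infinite mass and that $0\in S(N)$. Taking $f\equiv1$ in~\eqref{scale} gives $N([0,\alpha T]^d)\stackrel{\text{dist.}}{=}N([0,T]^d)$ for every $\alpha\in(0,1]^d$. On the other hand $\overline N_1\ll N$ and $\P(\overline N_1\neq0)=p_+$ yield $\P(N([0,T]^d)\geq1)\geq p_+>0$; and since $X_1$ has a density, almost surely no $+$ particle lands on $\Delta$, so $N$ charges no point of $\Delta$ and in particular $N(\{0\})=0$. If $0$ did not belong to the deterministic closed set $S(N)$, then $N$ would be locally finite near $0$, hence $N([0,tT]^d)\downarrow N(\{0\})=0$ almost surely as $t\downarrow0$ and $\P(N([0,tT]^d)\geq1)\to0$, contradicting the identity above. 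Therefore $0\in S(N)$, and consequently $N([0,T]^d)=+\infty$ almost surely.

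For the stationarity of $\widetilde N$ I would change variables through the homeomorphism $\phi:u\mapsto-\log(u/T)$ of $[0,T]^d-\Delta$ onto $\R_+^d$; since $N$ charges no point of $\Delta$, $\widetilde N=\phi_*N$, a Radon point process on $\R_+^d$ by the previous proposition. The dilation $u\mapsto\alpha u$, $\alpha\in(0,1]^d$, is conjugated by $\phi$ to the translation $y\mapsto y+v$ with $v=-\log\alpha\in\R_+^d$, and $\phi([0,\alpha T]^d)=v+\R_+^d$. For a continuous $f$ with compact support in $\R_+^d$, the function $g=f\circ\phi$, extended by $0$ on $\Delta$, is continuous on $[0,T]^d$, so applying~\eqref{scale} to $g$ gives, after the substitution,
\[
\int_{\{y\geq v\}}f(y)\,\widetilde N(dy)\ \stackrel{\text{dist.}}{=}\ \int_{\R_+^d}f(v+y)\,\widetilde N(dy),
\]
so that the restriction of $\widetilde N$ to $v+\R_+^d$ has the law of $\widetilde N$ translated by $v$: this is the asserted invariance under the semigroup of translations by $\R_+^d$. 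Taking expectations, the intensity measure of $\widetilde N$ is then translation invariant, hence a multiple of Lebesgue measure, and nonzero because $\widetilde N(\R_+^d)=N([0,T]^d)=+\infty$.

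It remains to show that \emph{every} point of $\Delta$ is an accumulation point, which I expect to be the real difficulty. Fix $b\in\Delta$; after permuting coordinates assume $b_1=0$. Applying~\eqref{scale} with $\alpha=(\delta/T_1,1,\dots,1)$ shows that each slab $\{x_1\leq\delta\}$ carries infinite mass almost surely, so that, letting $\delta\downarrow0$, $S(N)$ meets the face $F_1=\{x_1=0\}$; moreover~\eqref{scale} also shows that $S(N)$ is stable under coordinatewise dilation, $\alpha S(N)\subseteq S(N)$ for $\alpha\in(0,1]^d$, so it suffices to prove that $N$ accumulates at the extreme corner $(0,T_2,\dots,T_d)$ of $F_1$ (and similarly for the other faces). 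For that I would run, on the slab $\Sigma_\eps=\{x_1\leq\eps\}$, the comparison of the influx of $+$ particles with the outflux caused by $-$ particles carried out by Robert~\cite{Robert:03} in dimension one: a particle of $\overline N_n$ lying in $\Sigma_\eps$ can be removed only by a $-$ particle arriving in $\Sigma_\eps$, its first coordinate being no larger than that of the removed point; working with the monotone sequence $(\overline N_n)$ — the stationary equation~\eqref{statevolO} cannot be used directly since $b$ may be an accumulation point — and letting $\eps\downarrow0$ should give $N(B(b,\eps))=+\infty$ almost surely. Together with the dilation stability of $S(N)$ this yields $S(N)=\Delta$. The main obstacle is exactly this local flux estimate: one must show that near $\Delta$ the outflux caused by the one-sided killing rule genuinely cannot absorb the incoming $+$ particles, which is the multidimensional counterpart of the transience analysed in~\cite{Robert:03}.
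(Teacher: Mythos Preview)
Your treatment of the stationarity of $\widetilde N$ and of the fact that $0\in S(N)$ is correct and matches the paper. The gap is in the third part, which you flag as ``the real difficulty'' and leave to a vague flux argument. In fact it is no harder than what you already did for the origin, and the paper dispatches it with the same idea.

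The point you miss is that the scaling relation~\eqref{scale} can be applied not just to full slabs but to small boxes around a given $a\in\Delta$. If, say, $a=(0,a_2,\dots,a_d)$ with $a_i>0$ for $i\geq 2$, set
\[
A_\delta=[0,\delta T_1]\times\prod_{i=2}^d[a_i,a_i+\eps],\qquad 0<\delta\leq 1,
\]
and scale with $\alpha=(\delta,1,\dots,1)$: Relation~\eqref{scale} gives $N(A_\delta)\stackrel{\text{dist.}}{=}N(A_1)$. Now repeat verbatim your own argument for the origin. By ergodicity $\P(N(A_\delta)<+\infty)\in\{0,1\}$; if it were $1$, then $N(A_\delta)\leq N(A_1)$ a.s.\ together with the distributional identity would force $N(A_\delta)=N(A_1)$ a.s., contradicting $\P(N(A_1\!-\!A_\delta)\neq0)>0$ (e.g.\ because $\overline N_1\ll N$). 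Hence $N(A_\delta)=+\infty$ a.s., and since $A_1\!-\!A_\delta$ is a compact subset of $[0,T]^d\!-\!\Delta$ (here is where $a_i>0$ for $i\geq2$ is used), the previous proposition gives $N(A_1\!-\!A_\delta)<+\infty$ a.s.; the infinite mass must therefore accumulate at $a$. The case where several coordinates of $a$ vanish follows by the same argument, scaling simultaneously in all the zero coordinates.

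Your detour through the extreme corner $(0,T_2,\dots,T_d)$ and the dilation stability $\alpha S(N)\subset S(N)$ is not wrong, but it points the wrong way (dilations shrink, so from a given accumulation point you only recover those below it), which is why you were forced toward an ad hoc flux estimate. That estimate is not needed: scaling in one coordinate already localizes the accumulation near any prescribed point of the face.
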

\begin{proof}
Let $a\in\Delta$, it is assumed, that for example, only the first coordinate is $0$,
$a=(0,a_2,\ldots,a_d)$. Let $0<\delta\leq 1$, $\eps>0$ and denote by
\[
A_\delta=[0,\delta T_1]\times \prod_{i=2}^d [a_i,a_i+\eps],
\]
by taking $\alpha=(\delta,1,\ldots,1)$ and using Relation~\eqref{scale}, one gets the
identity
\[
N\left(A_\delta\right)\stackrel{\text{dist.}}{=} N\left(A_1\right)
\]
for all $0<\delta\leq 1$. Since $\P(N(A_\delta)<+\infty)$ is either $0$ or $1$ and since 
clearly $\P(N(A_1-A_\delta)\not=0)>0$, one gets that $\P(N(A_\delta)=+\infty)=1$. By
the above proposition, one has that $N(A_1-A_\delta)$ is almost surely finite. One
concludes that $a$ is an accumulation point. Consequently, the same property holds when
there are several coordinates which are $0$.

Relation~\eqref{scale} gives that, for $\alpha\in[0,1]^d$, the identity
\begin{multline*}
\left(N\left(\prod_{i=1}^d [\alpha_i y_i,\alpha_i x_i]\right),  x, y\in[0,T]^d, y\leq x\right)\\
\stackrel{\text{dist.}}{=}
\left(N\left(\prod_{i=1}^d [y_i, x_i]\right),  x, y\in[0,T]^d, y\leq x\right)
\end{multline*}
holds. By taking $z=-\log \alpha$, this relation can be rewritten as
\begin{multline*}
\left(\widetilde{N}\left(\prod_{i=1}^d [ v_i+z_i, u_i+z_i]\right),  u, v\in\R_+^d, v\leq u\right)\\
\stackrel{\text{dist.}}{=}
\left(\widetilde{N}\left(\prod_{i=1}^d [v_i, u_i]\right),  u, v\in[0,T]^d, v\leq u\right).
\end{multline*}
The point process $\widetilde{N}$ is invariant with respect to the non-negative translations.

\end{proof}

\begin{corollary}[Local One-Sided Policy on the torus $\T_1(T)$]\ \\
The minimal solution $N_{L}$ of the equation
\[
N_{L}\circ\theta=N_{L}+ \ind{I_{1}=+}\delta_{X_{1}}-\ind{I_1=-,X_1\not\in D(N_{L}), t_1(X_1,N_{L})\in B(X_1,1) }\delta_{t_1(X_{1},N_{L})}.
\]
for the LO policy is such that  $\P(N_L\in {\cal M}((0,T]))=1$ and every element of $\Delta$ is almost surely an accumulation point of $N$. 
\end{corollary}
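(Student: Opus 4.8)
The plan is to derive the accumulation at $\Delta$ from a monotonicity comparison with the GO policy, and the finiteness away from $\Delta$ from a localisation-and-induction argument. Since here $d=1$, one has $\Delta=\{0\}$, and I identify $\T_1(T)$ with the interval $[0,T]$ carrying its natural order. I will show that the minimal LO solution $N_L$ satisfies $S(N_L)=\{0\}$ almost surely, which gives both assertions simultaneously: the point $0$ is an accumulation point, and $N_L$ is a Radon measure on $(0,T]$.

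First I would compare $N_L$ with the minimal GO solution $N$ of Equation~\eqref{statevolO}. A ``$-$'' particle under the LO policy removes the nearest point to its right only when that point lies within distance $1$, hence removes weakly fewer points than under the GO policy; running the two backward constructions with the same arrivals and reproducing the coupling of the Monotonicity Lemma of Section~\ref{equisec} gives $\overline N_n^{GO}\ll\overline N_n^{LO}$ for every $n$, whence $N\ll N_L$. The two propositions preceding this corollary (applied with $d=1$) show that $S(N)=\Delta$, so every point of $\Delta$ is almost surely an accumulation point of $N$; since $N\ll N_L$ forces $S(N)\subseteq S(N_L)$, every point of $\Delta$ is almost surely an accumulation point of $N_L$.

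For $S(N_L)\subseteq\{0\}$ I would proceed as follows; by the ergodicity argument of Theorem~\ref{ex} the set $S(N_L)$ is deterministic. The first ingredient is a localisation identity in the spirit of step~(b) of Section~\ref{sectorus}: for $S\le T$ let $(\overline N_n^{S})$ be the backward construction for the LO policy built from only the arrivals with $X_1\le S$. Because a ``$-$'' at $x$ can only remove a point of $[x,x+1)$, a ``$-$'' arriving in $(S,T]$ never removes a point of $[0,S]$, and a ``$-$'' arriving in $[0,S]$ that removes a point of $(S,T]$ in the full dynamics removes nothing of $[0,S]$ in the restricted one either; hence $\overline N_n^{LO}([0,S]\cap\,\cdot\,)=\overline N_n^{S}$ for all $n$, so $N_L([0,S]\cap\,\cdot\,)$ has the distribution of the minimal LO solution on $[0,S]$ with uniform locations. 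The second ingredient is a flux balance excluding accumulation at a right endpoint: if $N$ is the minimal LO solution on some $[0,S]$ and $S\in S(N)$, evaluating the recursion on $A=(S-\delta,S]$ with $0<\delta<1$, taking expectations and using the monotonicity of $(\overline N_n(A))_n$ gives $\P(I_1={-},\ t_1(X_1,\overline N_n)\in A)\le p_+\,\P(X_1\in A)$; but for a ``$-$'' arriving at $x\in(S-\delta,S]$ the nearest point $\ge x$ lies in $(x,S]\subset A$, is within distance $1$, and eventually exists because $N((x,S])=+\infty$, so the left-hand side tends to $(1-p_+)\,\P(X_1\in A)$, forcing $p_+\ge1/2$, a contradiction. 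With these two facts I would induct on the length $S$: for $S<1$ the LO and GO policies on $[0,S]$ coincide, so $S(N^{LO}_{[0,S]})=\{0\}$ by the GO result; assuming the statement for all $S'<S$, the localisation identity applied with those $S'$ gives $S(N^{LO}_{[0,S]})\cap(0,S)=\emptyset$, hence $S(N^{LO}_{[0,S]})\subseteq\{0,S\}$, and the flux balance removes $S$, so $S(N^{LO}_{[0,S]})=\{0\}$. Taking $S=T$ yields $S(N_L)=\{0\}$ and thus $\P(N_L\in{\cal M}((0,T]))=1$.

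I expect the crux to be the interplay of these two ingredients. The monotonicity comparison only bounds $N_L$ from below, so it cannot rule out accumulation points; and a direct flux balance around a putative interior accumulation point $a$ of $N_L$ does not close, because a ``$-$'' arriving near $a$ need not remove a point near $a$ (it may have a closer right-neighbour). Cutting the interval at $S$ converts an interior accumulation point into a right-endpoint one, where the flux balance does bite; the delicate point, which makes the induction legitimate, is the verification that for a one-sided policy this cut is an exact identity and not merely an inequality as in the LG case of Section~\ref{sectorus}.
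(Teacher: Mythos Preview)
Your comparison $N\ll N_L$ and the derivation of accumulation at $\Delta=\{0\}$ match the paper exactly. Your localisation identity is correct and, as you note, is an exact equality for the one-sided policy (nicer than the inequality of step~(b) in Section~\ref{sectorus}); your flux balance at the right endpoint is also correct and shows that $S$ is never an accumulation point of $N^{LO}_{[0,S]}$. Together with the localisation, this immediately gives (no induction needed) that for every $S'\in(0,T]$ one has $N_L((S'-\eta,S'])<+\infty$ for some $\eta>0$: no positive point is a \emph{left}-accumulation point of $N_L$.

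The gap is in the ``induction on the length $S$''. The inductive step you state is correct: if $P(S')$ holds for all $S'<S$ then $P(S)$ holds. But this alone is not a valid induction principle over $\R$. Real induction requires, in addition to the base case and the closed-from-below step, an open-from-above step: if $P(S)$ holds then $P(S')$ holds for $S'$ in some $(S,S+\varepsilon)$. You do not verify this, and your two ingredients do not supply it. Concretely, your argument never rules out that some $a\in(0,T)$ accumulates \emph{from the right only}: localising at $S'=a$ loses the accumulation (it lies to the right of the cut), localising at $S'>a$ makes $a$ an interior point where your endpoint flux balance does not bite, and the flux balance at the new endpoint $S'$ is vacuous because $S'$ need not be an accumulation point. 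So from your ingredients you only get $S_-(N_L)\subseteq\{0\}$, not $S(N_L)\subseteq\{0\}$.

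The paper closes exactly this gap by a different route. It first shows that accumulation points of $N_L$ are at mutual distance at least $1$ (so $S(N_L)$ is finite), and then takes the \emph{smallest} $a>0$ in $S(N_L)$ and runs a flux balance on $A=[a-1,a+\varepsilon]$. For a ``$-$'' at $x\in[a-1+\varepsilon,a)$ one has $[a,a+\varepsilon)\subset[x,x+1)$; since $(0,a)$ contains no accumulation point, the nearest atom of $\overline N_n$ to the right of $x$ eventually lies in $[x,a+\varepsilon]\subset A$, \emph{whether $a$ accumulates from the left or from the right}. Letting $n\to\infty$ and then $\varepsilon\to0$ forces $p_+\geq 1/2$, a contradiction. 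Your localisation-and-endpoint scheme could be completed along the same lines by importing this $[a-1,a+\varepsilon]$ balance to handle the right-accumulation case; as written, however, the induction does not close.
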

\begin{proof}
With the same arguments as before, it is not difficult to prove that the solution $N$ of
Equation~\eqref{statevolO}, is such that $N\ll N_{L}$ which gives the result for the
accumulation points. The proof that $N_L$ is a Radon measure on $(0,T)$ with probability
$1$ is sketched. As before, one first proves that accumulation points are at distance $1$
at least. If there is another accumulation point than $0$, denote by $a>0$ the smallest
which is not $0$, by considering the evolution of the number of points on the interval $[a-1,a+\eps]$
for some $\eps>0$, it is not difficult to get a contradiction to the fact that $p_+<1/2$. 
\end{proof}

\providecommand{\bysame}{\leavevmode\hbox to3em{\hrulefill}\thinspace}
\providecommand{\MR}{\relax\ifhmode\unskip\space\fi MR }
% \MRhref is called by the amsart/book/proc definition of \MR.
\providecommand{\MRhref}[2]{%
  \href{http://www.ams.org/mathscinet-getitem?mr=#1}{#2}
}
\providecommand{\href}[2]{#2}


\begin{thebibliography}{10}

\bibitem{Beil}
M.~Beil, F.~L\"uck, F.~Fleischer, S.~Portet, W.~Arendt, and V.~Schmidt,
  \emph{Simulating the formation of keratin networks by a
  piecewise-deterministic {M}arkov process}, Journal of Theoretical Biology
  \textbf{256} (2009), 518--532.

\bibitem{Foss2}
A.~A. Borovkov and S.~G. Foss, \emph{Stochastically recursive sequences and
  their generalizations}, Siberian Advances in Mathematics \textbf{2} (1992),
  no.~1, 16--81, Siberian Advances in Mathematics.

\bibitem{Coffman}
E.G.~Jr. Coffman, M.R. Garey, and D.S Johnson, \emph{Approximation algorithms
  for {NP}-hard problems}, ch.~Bin Packing Approximation Algorithms: A Survey,
  PWS Publishing Co., Boston, MA, 1996.

\bibitem{Cornfeld:01}
I.~P. Cornfeld, S.~V. Fomin, and Ya.~G. Sina{\u\i}, \emph{Ergodic theory},
  Springer-Verlag, New York, 1982, Translated from the Russian by A. B.
  Sosinski\u\i.

\bibitem{Dawson:16}
Donald~A. Dawson, \emph{Measure-valued {M}arkov processes}, \'Ecole d'\'Et\'e
  de Probabilit\'es de Saint-Flour XXI---1991, Lecture Notes in Math., vol.
  1541, Springer, Berlin, 1993, pp.~1--260.

\bibitem{Ferrari}
Pablo~A. Ferrari, Roberto Fern{\'a}ndez, and Nancy~L. Garcia, \emph{Perfect
  simulation for interacting point processes, loss networks and {I}sing
  models}, Stochastic Processes and their Applications \textbf{102} (2002),
  no.~1, 63--88.

\bibitem{Foss}
Serguei Foss, \emph{Some open problems related to stability}, Proceedings of
  the Erlang Centennial -- 100 years of queueing (Copenhagen), April 2009.

\bibitem{Garcia}
Nancy~L. Garcia and Thomas~G. Kurtz, \emph{Spatial birth and death processes as
  solutions of stochastic equations}, ALEA. Latin American Journal of
  Probability and Mathematical Statistics \textbf{1} (2006), 281--303
  (electronic).

\bibitem{Karp}
R.~Karp, M.~Luby, and A.~Marchetti-Spaccamela, \emph{A probabilistic analysis
  of multi-dimensional bin-packing problems}, Proceedings of the 16th
  {S}ymposium on the {T}heory of {C}omputing (Washington) (IEEE, ed.), 1984,
  pp.~289--298.

\bibitem{Leskela}
Lasse Leskel\"a and Falk Unger, \emph{Stability of a spatial polling system
  with greedy myopic service}, http://arxiv.org/abs/0908.4585, 2009.

\bibitem{Levin}
David~A. Levin, Yuval Peres, and Elizabeth~L. Wilmer, \emph{Markov chains and
  mixing times}, American Mathematical Society, Providence, RI, 2009.

\bibitem{Loomis}
Lynn~H. Loomis, \emph{An introduction to abstract harmonic analysis}, D. Van
  Nostrand Company, Inc., Toronto-New York-London, 1953.

\bibitem{Loynes:01}
R.M. Loynes, \emph{The stability of queues with non independent inter-arrival
  and service times}, Proc. Cambridge Ph. Soc. \textbf{58} (1962), 497--520.

\bibitem{Lueck}
F.~L\"uck, M.~Beil, F.~Fleischer, S.~Portet, W.~Arendt, and V~Schmidt,
  \emph{The laplacian in a stochastic model for spatiotemporal reaction
  systems}, Preprint, 2009.

\bibitem{Mecke}
J.~Mecke, \emph{Station\"are zuf\"allige {M}asse auf lokalkompakten {A}belschen
  {G}ruppen}, Zeitschrift f\"ur Wahrscheinlichkeitstheorie und verw. Geb.
  \textbf{9} (1967), 36--58.

\bibitem{Neveu:12}
Jacques Neveu, \emph{Processus ponctuels}, \'{E}cole d'\'{E}t\'e de
  {P}robabilit\'es de {S}aint-{F}lour (P.-L. Hennequin, ed.), Lecture Notes in
  Math., vol. 598, Springer-Verlag, Berlin, 1977, pp.~249--445.

\bibitem{Neveu:14}
\bysame, \emph{Construction de files d'attente stationnaires}, Lecture notes in
  Control and Information Sciences, no.~60, Springer Verlag, 1983, pp.~31--41.

\bibitem{Nummelin:02}
Esa Nummelin, \emph{General irreducible {M}arkov chains and nonnegative
  operators}, Cambridge University Press, Cambridge, 1984.

\bibitem{Penrose}
Mathew Penrose, \emph{Existence and spatial limit theorems for lattice and
  continuum particle systems}, Probability Surveys \textbf{5} (2008), 1--36.

\bibitem{Robert:03}
Philippe Robert, \emph{Sur un processus de vie et de mort de particules sur
  [0,1]}, Annales de l'institut Henri Poincar\'{e} \textbf{23} (1987), no.~2,
  225--235.

\bibitem{Robert:08}
\bysame, \emph{Stochastic networks and queues}, Stochastic Modelling and
  Applied Probability Series, vol.~52, Springer, New-York, June 2003.

\bibitem{Rudin:01}
Walter Rudin, \emph{Real and complex analysis}, third ed., McGraw-Hill Book
  Co., New York, 1987.

\bibitem{Ryll:01}
Czes{\l}aw Ryll-Nardzewski, \emph{Remarks on processes of calls}, Proc. 4th
  Berkeley Sympos. Math. Statist. and Prob., Vol. II, Univ. California Press,
  Berkeley, Calif., 1961, pp.~455--465.

\bibitem{Serfozo}
Richard Serfozo, \emph{Basics of applied stochastic processes}, Probability and
  its Applications, vol. XIV, Springer, 2009.

\bibitem{Weil}
Andr{\'e} Weil, \emph{L'int\'egration dans les groupes topologiques et ses
  applications}, Actual. Sci. Ind., no. 869, Hermann et Cie., Paris, 1940.

\end{thebibliography}
\end{document}